\numberwithin{equation}{section}
\theoremstyle{plain}
\newtheorem{thm}{Theorem}[section]
\newtheorem{lem}[thm]{Lemma}
\newtheorem*{referthmA}{Theorem A}
\newtheorem*{rem*}{Remark}
\theoremstyle{definition}
\newtheorem{defn}[thm]{Definition}
\newtheorem{rem}[thm]{Remark}
\newcommand{\N}{\mathbb{N}}
\newcommand{\R}{\mathbb{R}}
\newcommand{\Z}{\mathbb{Z}}
\newcommand{\ZnN}{(\mathbb{Z}^n)^N}
\newcommand{\calA}{\mathcal{A}}
\newcommand{\calF}{\mathcal{F}}
\newcommand{\calS}{\mathcal{S}}
\newcommand{\supp}{\mathrm{supp}\, }
\newcommand{\op}{\mathrm{Op}}
\newcommand{\veck}{\boldsymbol{k}}
\newcommand{\vecm}{\boldsymbol{m}}
\newcommand{\vecbeta}{\boldsymbol{\beta}}
\newcommand{\vecxi}{\boldsymbol{\xi}}
\newcommand{\veceta}{\boldsymbol{\eta}}
\newcommand{\vecnu}{\boldsymbol{\nu}}
\newcommand{\vecmu}{\boldsymbol{\mu}}
\newcommand{\vs}{\vspace{12pt}}
\begin{document}
\title[Multilinear pseudo-differential operators
of H\"ormander class]
{Boundedness  
of multilinear pseudo-differential operators 
with symbols in the 
H\"ormander class $S_{0,0}$} 

\author[T. Kato]{Tomoya Kato}
\author[A. Miyachi]{Akihiko Miyachi}
\author[N. Tomita]{Naohito Tomita}

\address[T. Kato]
{Division of Pure and Applied Science, 
Faculty of Science and Technology, Gunma University, 
Kiryu, Gunma 376-8515, Japan}

\address[A. Miyachi]
{Department of Mathematics, 
Tokyo Woman's Christian University, 
Zempukuji, Suginami-ku, Tokyo 167-8585, Japan}

\address[N. Tomita]
{Department of Mathematics, 
Graduate School of Science, Osaka University, 
Toyonaka, Osaka 560-0043, Japan}

\email[T. Kato]{t.katou@gunma-u.ac.jp}
\email[A. Miyachi]{miyachi@lab.twcu.ac.jp}
\email[N. Tomita]{tomita@math.sci.osaka-u.ac.jp}

\date{\today}

\keywords{Multilinear pseudo-differential operators,
multilinear H\"ormander symbol classes, 
local Hardy spaces}
\thanks{This work was supported by JSPS KAKENHI, 
Grant Numbers 
20K14339 (Kato), 20H01815 (Miyachi), and 20K03700 (Tomita).}
\subjclass[2020]{35S05, 42B15, 42B35}

\begin{abstract}
The multilinear pseudo-differential operators 
with symbols in the multilinear 
H\"ormander class $S_{0,0}$ are considered. 
A complete identification of the cases 
where those operators define 
bounded operators between local Hardy spaces 
is given. 
Some results for 
the boundedness 
between Wiener amalgam spaces are also given. 
These are extensions and improvements of 
the results known in the bilinear case. 
\end{abstract}

\maketitle

\section{Introduction}\label{Introduction}
\subsection{Introduction and main result}
\label{first-subsection}

Throughout this paper, we fix positive integers $n$ and $N$. 
We consider $N$-linear pseudo-differential operators 
acting on functions on $\R^n$.

If $\sigma = \sigma (x, \xi_1, \dots, \xi_N)$, 
$(x, \xi_1, \dots, \xi_N)\in (\R^n)^{N+1}$, 
is a measurable function on $(\R^n)^{N+1}$ 
satisfying the estimate 
$\left| \sigma (x, \xi_1, \dots, \xi_N)  \right|
\le c (1+ |\xi_1|+ \cdots + |\xi_N|)^{L}$ with some $c$ and 
$L\in \R$, 
then the {\it $N$-linear pseudo-differential operator} $T_{\sigma}$ 
is defined by  
\[
T_{\sigma}(f_1,\dots,f_N)(x)
=\frac{1}{(2\pi)^{Nn}}
\int_{(\R^n)^N}e^{i x \cdot(\xi_1+\cdots+\xi_N)}
\sigma (x, \xi_1, \dots, \xi_N) 
\prod_{j=1}^N\widehat{f_j}(\xi_j)
\, d\xi_1 \dots d\xi_N,  
\]
where $x\in \R^n$, 
$f_1,\cdots,f_N \in \calS(\R^n)$, 
and $\widehat{f_j}$ denotes the Fourier transform.  
The function $\sigma$ is called the {\it symbol} of the operator 
$T_{\sigma}$. 
If 
$\sigma (x, \xi_1, \dots, \xi_N)$ does not depend on 
the variable $x$, then the function 
$\sigma = \sigma (\xi_1, \dots, \xi_N)$ is called the 
{\it multiplier} and 
$T_{\sigma}$ is called the 
{\it $N$-linear Fourier multiplier operator}.

We shall be interested in 
the boundedness of the $N$-linear pseudo-differential operators.   
For the boundedness of $T_{\sigma}$, 
we use the following terminology. 
Let $X_1,\dots, X_N$, and $Y$ be function spaces on $\R^n$ 
equipped with quasinorms 
$\|\cdot \|_{X_j}$ and $\|\cdot \|_{Y}$, 
respectively.  
If there exists a constant $C$ for which 
the inequality 
\begin{equation}\label{boundedness-XjY}
\|T_{\sigma}(f_1,\dots, f_N)\|_{Y}
\le C \prod_{j=1}^{N} 
\|f_j\|_{X_j} 
\end{equation}
holds for all 
$f_j\in \calS \cap X_j$, 
then, with a slight abuse of terminology, 
we say that 
$T_{\sigma}$ is bounded from 
$X_1 \times \cdots \times X_N$ to $Y$ 
and write 
$T_{\sigma}: X_1 \times \cdots \times X_N \to Y$. 
The smallest constant $C$ of 
\eqref{boundedness-XjY} 
is denoted by 
$\|T_{\sigma}\|_{X_1 \times \cdots X_N \to Y}$. 
If $\calA$ is a class of symbols,  
we denote by $\mathrm{Op}(\calA)$
the class of all operators $T_{\sigma}$ 
corresponding to $\sigma \in \calA$. 
If $T_{\sigma}: X_1 \times \cdots \times X_N \to Y$ 
for all $\sigma \in \calA$, 
then we write 
$\mathrm{Op}(\calA) 
\subset B (X_1 \times \cdots \times X_N \to Y)$.

Notice that 
$T_{\sigma}$ is originally defined 
for $f_j \in \calS (\R^n)$. 
If $T_{\sigma}: 
X_1 \times \cdots \times X_N \to Y$ 
in the sense given above, 
then, in many cases,  
we can employ some limiting argument 
to extend the definition of 
$T_{\sigma}$ to general $f_j \in X_j$ 
and prove that 
the inequality \eqref{boundedness-XjY} holds 
for all $f_j \in X_j$.

In this paper, we shall consider 
the boundedness of 
$T_{\sigma}$ from $h^{p_1} \times \cdots \times h^{p_N}$ 
to $h^p$, 
where $h^r$ denotes the local Hardy space 
of Goldberg \cite{goldberg 1979}. 
We shall also consider 
function spaces 
$L^p$ (the Lebesgue space), 
$H^p$ (the Hardy space), 
$BMO$, $bmo$ (the local BMO space), 
and the Wiener amalgam spaces $W^{p,q}_{s}$. 
The definitions of these function spaces will be given in 
Subsection \ref{functionspaces} 
and Section \ref{section2}.

It is known that if   
$p, p_1, \dots, p_N \in (0,\infty]$ and if 
for every  
$\sigma \in C_{0}^{\infty} ((\R^n)^N)$ 
the Fourier multiplier operator 
$T_{\sigma}$ is bounded 
from  
$h^{p_1}\times \cdots \times h^{p_N}$ to 
$h^p$ 
then 
$1/p \le 1/p_1 + \cdots + 1/p_N$;   
see \cite[Chapter 7, Proposition 7.3.7]{grafakos 2014m}, 
where the 
assertion is given for $L^p$ spaces but the 
proof can be modified to cover the case of $h^p$ spaces.    
Thus we shall consider 
the boundedness  
$T_{\sigma}: h^{p_1} \times \cdots \times h^{p_N} 
\to h^p$ only for 
$p, p_1, \dots, p_N \in (0,\infty]$ that satisfy 
$1/p \le 1/p_1 + \cdots + 1/p_N$.

We shall consider 
the class of symbols defined as follows.

\begin{defn} 
For $m \in \R$, 
the class $S^{m}_{0,0}(\R^n, N)$ is defined to be 
the set of all 
$C^{\infty}$ functions 
$\sigma = \sigma (x, \xi_1, \dots, \xi_N)$ on 
$(\R^n)^{N+1}$ that satisfy the estimate 
\[
\left|
\partial^{\alpha}_x 
\partial^{\beta_1}_{\xi_1} \cdots \partial^{\beta_N}_{\xi_N} 
\sigma(x,\xi_1,\dots,\xi_N)
\right|
\le C_{\alpha,\beta_1,\dots,\beta_N}
\big( 1 + |\xi_1| +\cdots + |\xi_N| \big)^{m}
\]
for all multi-indices 
$\alpha, \beta_1, \dots, \beta_N \in \N_0^n
=\{ 0,1,2,\dots\}^{n}$. 
\end{defn}

The 
class $S^m_{0,0}(\R^n, N)$ is 
an $N$-linear version of 
the well-known 
H\"ormander class considered in the theory of 
linear pseudo-differential operators.  
The class $S^m_{0,0}(\R^n, N)$ for $N=2$ 
was considered by 
B\'enyi--Torres \cite{BT-2003, BT-2004} 
and investigated by 
B\'enyi--Maldonado--Naibo--Torres 
\cite{BMNT} 
and  
B\'enyi--Bernicot--Maldonado--Naibo--Torres 
\cite{BBMNT}. 
See these papers for the 
basic properties of the class 
$S^m_{0,0}(\R^n, 2)$ 
including 
symbolic calculus, duality, and interpolation.

We shall consider this problem: 
for $N\ge 2$ and 
for given $p, p_1, \dots, p_N \in (0, \infty]$,  
identify those $m\in \R$ 
such that 
$\op\left( S^{m}_{0,0}(\R^n, N) \right) \subset
B(h^{p_1} \times \cdots \times h^{p_N} \to h^{p})$. 
The origin of this problem may go back to 
B\'enyi-Torres \cite{BT-2004}, 
where the authors proved that 
for any $1 \le p_1, p_2, p < \infty$ satisfying 
$1/p = 1/p_1 + 1/p_2$
there exists a multiplier in $S^{0}_{0,0}(\R^n, 2)$ 
for which the corresponding bilinear 
Fourier multiplier operator is not bounded 
from $L^{p_1} \times L^{p_2}$ to $L^p$. 
Thus in order to have the relation 
$\op\left( S^{m}_{0,0}(\R^n, 2) \right) \subset
B(L^{p_1} \times L^{p_2} \to L^{p})$ 
the number $m$ must be negative.

In the case $N=2$, after the works of 
Michalowski--Rule--Staubach \cite{MRS-2014}
and 
B\'enyi--Bernicot--Maldonado--Naibo--Torres \cite{BBMNT}, 
the following theorem was proved 
by the second and the third named authors 
of the present paper.

\begin{referthmA}[\cite{MT-2013}]
Let 
$0 < p, \, p_1,\, p_2 \le \infty$, 
$1/p =1/p_1 + 1/p_2$, and $m\in \R$. 
Then the boundedness 
\begin{equation*}
\op\left( S^{m}_{0,0}(\R^n, 2) \right) \subset
B(h^{p_1} \times h^{p_2} \to h^{p}), 
\end{equation*}
where if $p_1, p_2$, or $p$ is equal to $\infty$  
then 
the corresponding $h^{p_1}$, $h^{p_2}$, or $h^{p}$ 
should be replaced by $bmo$, 
holds if and only if 
\begin{equation}\label{criticalm-bilinear}
m 
\le 
-n \left(\max\left\{
\frac{1}{\,2\,}, \,
\frac{1}{p_1}, \,
\frac{1}{p_2}, \,
1-\frac{1}{p_1} - 
\frac{1}{p_2} , \, 
\frac{1}{p_1} + 
\frac{1}{p_2} -\frac{1}{\,2\,}
\right\}\right). 
\end{equation}
\end{referthmA}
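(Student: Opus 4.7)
The proof splits into necessity and sufficiency of \eqref{criticalm-bilinear}, which I would handle in that order, exploiting symmetry and duality to reduce the number of distinct cases to a handful. The elementary symmetry is the swap $p_1\leftrightarrow p_2$. A second symmetry comes from bilinear transposition: by the symbolic calculus for $S^m_{0,0}$ developed in \cite{BBMNT}, the transposes $T_\sigma^{*,1}, T_\sigma^{*,2}$ again have symbols in the same class, so duality against $h^p$ for $1<p<\infty$ (or $bmo$ when $p=\infty$, or atomic duality when $p\le 1$) permutes the five quantities inside the maximum in \eqref{criticalm-bilinear}.

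For necessity I would construct explicit counterexamples realizing each of the five extremal values. Fix $\psi_1,\psi_2\in C_c^\infty(\R^n)$ with supports in thin annuli and consider a symbol of the form
\[
\sigma(x,\xi_1,\xi_2)=\varphi(x)\sum_{j\ge j_0} 2^{jm_0}\psi_1(2^{-j}\xi_1)\psi_2(2^{-j}\xi_2)e^{ix\cdot(a_j\xi_1+b_j\xi_2)}
\]
with $m_0$ slightly larger than the claimed critical value and with supports and phases $\psi_i,a_j,b_j$ tuned to the particular configuration being ruled out. One then tests against inputs $f_1,f_2$ spectrally localized near $2^j$: alignment of frequencies $\xi_1,\xi_2$ forces the $1/2$ term, concentration of one input forces the $1/p_i$ terms, and cancellations between $\xi_1$ and $\xi_2$ producing low-frequency output force the $1-1/p_1-1/p_2$ and $1/p_1+1/p_2-1/2$ terms. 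In each case $\|T_\sigma(f_1,f_2)\|_{h^p}$ grows faster than $\prod_{i=1,2}\|f_i\|_{h^{p_i}}$ allows, contradicting boundedness.

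For sufficiency, after the reduction above it suffices to establish two representative estimates: the central $h^2\times h^2\to h^1$ at $m=-n/2$ and the mixed endpoint $h^{p_1}\times bmo\to h^{p_1}$ at $m=-n/p_1$ for $0<p_1<\infty$. For each I would decompose $\sigma=\sum_{j\ge 0}\sigma_j$ with $\sigma_j$ supported in $|\xi_1|+|\xi_2|\sim 2^j$, expand each $\sigma_j$ in a Fourier series on a cube of side $\sim 2^j$ to convert $T_{\sigma_j}$ into a rapidly convergent sum of modulated/dilated linear Fourier multipliers paired with smooth bumps in $x$, and then bound each piece by atomic/molecular decompositions of $h^p$ combined with a Plancherel-based orthogonality estimate for the $h^1$ target. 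The prefactor $2^{jm}$ with $m$ at the critical value absorbs the operator-norm growth of each dyadic piece, permitting summation in $j$.

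The main obstacle is the estimate $h^2\times h^2\to h^1$ at $m=-n/2$. Because the class $S_{0,0}$ provides no decay in the $\xi$-variables, the necessary $2^{-jn/2}$ saving must be extracted purely from the $x$-smoothness of $\sigma_j$ through Plancherel together with a near-orthogonality estimate for the dyadic outputs; this is the quantitative heart of the argument and the step where the $S_{0,0}$ hypothesis is most delicate. A secondary difficulty is the treatment of the $\infty$-endpoints, where atomic decomposition of $h^p$ must be replaced by a duality argument against $H^1$ that must be managed carefully in the presence of the $x$-dependence of the symbol.
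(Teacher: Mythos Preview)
Your outline follows the strategy of the original reference \cite{MT-2013} rather than the argument in this paper, which recovers Theorem~A as the special case $N=2$, $1/p=1/p_1+1/p_2$ of Theorem~\ref{main-thm-hp}.

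For sufficiency the paper uses neither a dyadic decomposition nor atomic/molecular $h^p$ theory nor any reduction by transposition. It decomposes $\sigma$ on the \emph{unit} lattice via $\prod_j\varphi(\xi_j-\nu_j)$, $\nu_j\in\Z^n$, expands each piece as a Fourier series on a cube of \emph{fixed} side, and estimates $T_\sigma$ directly in Wiener amalgam norms $W^{p,q}_s$. The embeddings of Lemma~\ref{Waembd} (namely $h^{p_j}\hookrightarrow W^{p_j,2}_{n(1/2-1/p_j)}$, $bmo\hookrightarrow W^{\infty,2}$, and $W^{p_0,t}\hookrightarrow h^p$) then transfer the estimate back to $h^p$, while the entire bilinear interaction is absorbed by a single discrete convolution inequality on $\Z^n$ (Lemmas~\ref{productLweakp'} and \ref{productLweak1}). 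This handles every $(p,p_1,p_2)$ by one computation, extends immediately to $N\ge 2$ and to $1/p\le\sum_j 1/p_j$, and needs no duality. For necessity the paper likewise avoids building a separate counterexample for each facet of the maximum: it tests one family of multipliers against the Wainger functions of Lemma~\ref{Wainger}, randomizes via Khintchine's inequality to extract an $\ell^2$ lower bound, and then interpolates with the known $L^2\times\cdots\times L^2\to L^2$ estimate (proof of Theorem~\ref{main-thm-necessity}).

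Your reduction step also has a gap as written. The critical order in \eqref{criticalm-bilinear} is a maximum of five affine functions of $(1/p_1,1/p_2)$, and bilinear complex interpolation from two base estimates only recovers the sharp $m$ along the segments joining them; even after the $S_3$ transposition symmetry this does not reproduce all five facets at their sharp values. Concretely, your endpoint $h^{p_1}\times bmo\to h^{p_1}$ at $m=-n/p_1$ is the correct critical order only for $p_1\le 2$ (for $p_1>2$ the maximum in \eqref{criticalm-bilinear} equals $1-1/p_1$, so the claimed $m=-n/p_1$ is strictly above threshold and the estimate is false), and once $p$ or $p_1$ drops below $1$ the transposition symmetry you invoke is no longer available. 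A complete proof along your lines therefore requires additional endpoint estimates, as in \cite{MT-2013}; the amalgam approach here sidesteps this bookkeeping entirely.
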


The purpose of the present paper is to generalize this theorem 
to the case $N\ge 2$ 
and remove the restriction 
$1/p=1/p_1 + 1/p_2$. 
Our result also gives an improvement of 
Theorem A 
in the case 
$p=\infty$;  
we prove that 
$\op\left( S^{-n}_{0,0}(\R^n, 2) \right) \subset
B(bmo \times bmo \to L^{\infty})$,   
whereas  
Theorem A gives the weaker result 
$\op\left( S^{-n}_{0,0}(\R^n, 2) \right) \subset
B(bmo \times bmo \to bmo)$.

The main result of the present paper reads as follows.

\begin{thm}\label{main-thm-hp}
Let $N\ge 2$, 
$0 < p, \, p_1,\, \dots, \, p_N \le \infty$, 
$1/p \le 1/p_1 + \dots + 1/p_N$, and $m\in \R$. 
Then the boundedness 
\begin{equation}\label{boundedness-hpj-hp}
\op\left( S^{m}_{0,0}(\R^n, N) \right) \subset
B(h^{p_1} \times \cdots \times h^{p_N} \to h^{p})
\end{equation}
holds if and only if 
\begin{equation}\label{criticalm}
m \le 
\min \left\{ \frac{n}{\,p\,},\, \frac{n}{\,2\,} \right\} 
- \sum_{j =1}^{N} 
\max \left\{ 
\frac{n}{\,p_j\,},\, \frac{n}{\,2\,}  \right\}.  
\end{equation}
If \eqref{criticalm} is satisfied 
and if some of the $p_j$'s are equal to $\infty$, 
then 
\eqref{boundedness-hpj-hp} holds 
with the corresponding $h^{p_j}$ replaced by $bmo$. 
\end{thm}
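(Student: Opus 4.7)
The proof naturally splits into a necessity part (producing counterexamples when \eqref{criticalm} fails) and a sufficiency part (proving boundedness when it holds). Necessity extends the bilinear counterexamples from \cite{BT-2004,MT-2013} to the $N$-linear setting: for each competing quantity on the right of \eqref{criticalm} I would exhibit a symbol in $S^m_{0,0}(\R^n,N)$ at the critical value of $m$ and test functions realizing the bound. Each factor $\max\{n/p_j,n/2\}$ is tested in two regimes --- $p_j\le 2$ via inputs whose Fourier transforms are bumps on large dyadic balls, and $p_j>2$ via a Khintchine-type random-sign construction on the frequency side --- while the output-side constraint $\min\{n/p,n/2\}$ is produced by analogous considerations on the output. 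These constructions are routine $N$-linear extensions of the bilinear ones.

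For sufficiency, I would first reduce from $x$-dependent symbols to Fourier multipliers. Since all $x$-derivatives of $\sigma$ obey the same $S^m_{0,0}$ bounds, a standard Fourier-series expansion of $\sigma(x,\xi_1,\dots,\xi_N)$ in the $x$ variable (after a smooth partition of unity in $x$) represents $T_\sigma$ as a rapidly convergent sum of modulated Fourier multiplier operators whose symbols $\tau$ lie uniformly in $S^m_{0,0}(\R^n,N)$. It then suffices to prove the multiplier version. Next, a dyadic decomposition $\tau=\sum_{\nu\ge 0}\tau_\nu$ with $\tau_\nu$ supported in $|\xi_1|+\cdots+|\xi_N|\sim 2^\nu$ reduces the task to the estimate
\[
\bigl\|T_{\tau_\nu}\bigr\|_{h^{p_1}\times\cdots\times h^{p_N}\to h^p}
\le C\, 2^{\nu m}\cdot 2^{\nu\left(\sum_{j=1}^{N}\max\{n/p_j,n/2\}-\min\{n/p,n/2\}\right)},
\]
which, combined with the strict part of \eqref{criticalm}, gives a summable geometric series in $\nu$; the equality case is then handled by a sharper $\ell^{2}$-in-$\nu$ argument based on the Littlewood--Paley characterization of $h^p$.

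The dyadic estimate is proved by multilinear complex interpolation from a finite set of endpoint cases in which each $p_j\in\{2,\infty\}$ and $p$ attains an extremal value. The $L^2$ endpoints follow from Plancherel applied to the kernel of $T_{\tau_\nu}$, whose $L^2$-size scales as $2^{\nu Nn/2}$; small-$p$ output endpoints follow from the atomic decomposition of $h^p$ together with size and cancellation estimates on the kernel; and $bmo$-input endpoints are recovered via $h^1$-duality. The principal obstacle I anticipate is the improvement over Theorem A at the configuration $p_1=\cdots=p_N=\infty$ with $p=\infty$: the statement requires $L^\infty$ (not merely $bmo$) control of the output, so one cannot dualize from an $h^1$-output estimate in a naive way, because products of $bmo$ functions are not well defined. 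Instead, one must exploit the dyadic frequency localization of $\tau_\nu$ to convert $bmo$-control of each input into pointwise $L^\infty$-control of the output, using that frequency-truncated $bmo$ functions are essentially bounded by their $bmo$-norms times the logarithm of the frequency scale, and the latter is absorbed into the $2^{\nu m}$ factor. Inserting this refined $L^\infty$ endpoint estimate into the complex-interpolation scheme, and verifying that all the endpoint bounds fit together to reproduce \eqref{criticalm}, constitutes the technical heart of the sufficiency proof.
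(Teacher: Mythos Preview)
Your necessity sketch is broadly in the right spirit, and the paper does use Khintchine-type randomization and duality at key points. The sufficiency proposal, however, has a genuine gap at the critical value of $m$, and this is exactly where the content of the theorem lies.

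The dyadic-shell decomposition $\tau=\sum_{\nu}\tau_{\nu}$ with $|\xi_1|+\cdots+|\xi_N|\sim 2^{\nu}$ cannot be summed at the endpoint $m=\min\{n/p,n/2\}-\sum_j\max\{n/p_j,n/2\}$ by the mechanism you describe. The pieces $T_{\tau_{\nu}}(f_1,\dots,f_N)$ have \emph{no output-frequency orthogonality}: the output frequency is $\xi_1+\cdots+\xi_N$, which can lie anywhere in $\{|\zeta|\lesssim 2^{\nu}\}$, so different $\nu$'s overlap completely on the low-frequency side and the Littlewood--Paley $\ell^2$ structure of $h^p$ gives you nothing. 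Your scheme therefore proves only the \emph{strict} inequality in \eqref{criticalm}, not the equality case. Likewise, your $L^{\infty}$-output idea---bounding a frequency-truncated $bmo$ function by its $bmo$ norm times $\log(2^{\nu})$---produces an extra factor $\nu^{N}$ at the critical $m=-Nn/2$, which is not summable and cannot be ``absorbed into $2^{\nu m}$'' since at the critical exponent the main factor is exactly $1$.

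The paper's route is quite different and is designed precisely to reach the endpoint. Instead of dyadic shells, it uses a \emph{unit-cube} frequency decomposition and works in Wiener amalgam spaces $W^{p,q}_s$, proving estimates of the form $T_\sigma:\prod_j W^{p_j,2}_{s_j}\to W^{p_0,t}$ and then invoking the embeddings $h^{p_j}\hookrightarrow W^{p_j,2}_{s_j}$, $bmo\hookrightarrow W^{\infty,2}$, and $W^{p_0,t}\hookrightarrow h^p$. The endpoint is captured by two sharp discrete product inequalities (Lemmas~\ref{productLweakp'} and~\ref{productLweak1}); in particular the $L^{\infty}$-output case rests on the $N$-linear Hilbert-type inequality
\[
\sum_{\nu_1,\dots,\nu_N\in\Z^n}(1+|\nu_1|+\cdots+|\nu_N|)^{-Nn/2}\prod_{j=1}^N A_j(\nu_j)\lesssim\prod_{j=1}^N\|A_j\|_{\ell^2},
\]
which holds only for $N\ge 2$ and is exactly what replaces your logarithmic loss by a genuinely bounded quantity. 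No complex interpolation between $\{2,\infty\}$-endpoints is used for sufficiency; the amalgam-space argument handles all $p_j\in(0,\infty]$ directly.
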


Observe that if 
$N=2$ and $1/p=1/p_1 + 1/p_2$ 
then 
\eqref{criticalm} coincides with 
\eqref{criticalm-bilinear}.

\begin{rem} 
(1) In the case $N=1$, 
the operator $T_{\sigma}$ is 
a linear pseudo-differential operator 
and the class 
$S^{m}_{0,0}(\R^n, 1)$ is the 
usual H\"ormander class. 
For this case, the following assertion holds. 
{\it 
Let 
$0 < p_1\le p \le \infty$ and $m\in \R$. 
Then the boundedness 
\begin{equation*}
\op\left( S^{m}_{0,0}(\R^n, 1) \right) \subset
B(h^{p_1}  \to h^{p})  
\end{equation*}
with $h^{p}$ ($h^{p_1}$, resp.) replaced by $bmo$ 
when $p=\infty$ ($p_1=\infty$, resp.)  
holds if and only if \/} 
\begin{equation*}
m \le 
\min \left\{ \frac{n}{\,p\,},\, \frac{n}{\,2\,} \right\} 
- \max \left\{ 
\frac{n}{\,p_1\,},\, \frac{n}{\,2\,}  \right\}.   
\end{equation*}
In fact, this result is already known. 
The `if' part for $p_1=p$ is given 
by Calder\'on--Vaillancourt \cite{CV} (the case $p_1 = p_2=2$),  
by Fefferman \cite{F-1973} and 
Coifman--Meyer \cite{CM-Ast} (the case $1< p_1=p<\infty$), 
and by 
Miyachi \cite{Miyachi-1987} 
and P\"aiv\"arinta--Somersalo \cite{PS-1988}  (the case $0<p_1=p\le 1$).  
The `if' part for the case $p_1 < p$ can be deduced from the 
case $p_1=p$ with the aid of the mapping properties 
of the fractional integration operator 
and symbolic calculus in $S^{m}_{0,0}(\R^n, 1)$. 
The `only if' part for the case $p_1=p$ 
can be found, {\it e.g.\/}, in 
\cite[Section 5]{Miyachi-1980}.  
It should be remarked that the method of the present paper 
can be applied, with only slight modification, 
to the case $N=1$ to prove the above assertion.

(2) 
In the case $N=1$, 
the target space 
$h^{\infty}$ has to be replaced by $bmo$, 
in particular, 
the boundedness 
$\op (S^{-n/2}_{0,0}(\R^n, 1))\subset B (L^{\infty}\to L^{\infty})$ 
does not hold 
(see \cite[Section 5, p.\ 151]{Miyachi-1987}).  
However, if $N\ge 2$ then 
we have the boundedness with the target space 
$h^{\infty}=L^{\infty}$. 
This is related to the fact that 
Lemma \ref{productLweak1} to be given in Section 
\ref{section2} 
holds only for $N\ge 2$.  
\end{rem}

\subsection{Refined version of the main theorem}

In fact, we shall give a slightly refined version of 
Theorem \ref{main-thm-hp}. 
To give the refined version, 
we use the following.

\begin{defn}
For $\vecm = (m_1,\dots,m_N)\in\R^N$, 
the class 
$S^{\vecm}_{0,0}(\R^n, N)$ is 
defined to be the set of all 
$C^{\infty}$ functions 
$\sigma = \sigma (x, \xi_1, \dots, \xi_N)$ on 
$(\R^n)^{N+1}$ that satisfy the estimate 
\[
\left|
\partial^{\alpha}_x 
\partial^{\beta_1}_{\xi_1} \cdots \partial^{\beta_N}_{\xi_N} 
\sigma(x,\xi_1,\dots,\xi_N)
\right|
\le C_{\alpha,\beta_1,\dots,\beta_N}
\prod_{j=1}^{N}
\big( 1 + |\xi_j| \big)^{m_j}
\]
for all $\alpha, \beta_1, \dots, \beta_N \in \N_0^n$. 
For $m\in \R$, 
we write 
$S^{m}_{0,0}(\R^n, N)^{\times}$ 
to denote 
the set of all $\sigma \in S^{m}_{0,0}(\R^n, N)$ 
that do not depend on the variable $x$. 
We consider $\sigma \in S^{m}_{0,0}(\R^n, N)^{\times}$ 
as a function defined on $(\R^n )^N$.
\end{defn}

Notice that 
if 
$m_1, \dots, m_N \le 0$ and if 
$m_1+\dots+m_N = m$  
then 
$S^{m}_{0,0}(\R^n, N)
\subset 
S^{\vecm}_{0,0}(\R^n, N)$. 
Thus 
the assertion (1) of the following theorem 
is a refined version of the `if' part of 
Theorem \ref{main-thm-hp} for the case $p<\infty$. 
The assertion (2) of this theorem is essentially a restatement 
of the `if' part of 
Theorem \ref{main-thm-hp} for the case $p=\infty$.

\begin{thm}\label{main-thm-sufficiency}
Let $N\ge 2$, 
$0<p, \, p_1,\, \dots, \, p_N \le \infty$, and 
$1/p \le 1/p_1 + \dots + 1/p_N$. 
\begin{enumerate}
\item 
If $p<\infty$ and if ${\vecm} = (m_1, \dots, m_N)\in \R^{N}$ satisfies 
\[
- \max \left\{ 
\frac{n}{\,p_j\,},\, \frac{n}{\,2\,}  \right\} 
< m_j <
\frac{n}{\,2\,} -\max \left\{ 
\frac{n}{\,p_j\,},\, \frac{n}{\,2\,}  \right\},  
\quad j=1, \dots, N, 
\]
and 
\[
m_1 + \cdots +m_N = 
\min \left\{ \frac{n}{\,p\,},\, \frac{n}{\,2\,} \right\} 
- \sum_{j =1}^{N} 
\max \left\{ 
\frac{n}{\,p_j\,},\, \frac{n}{\,2\,}  \right\},  
\]
then 
$\op\left( S^{\vecm}_{0,0}(\R^n, N) \right) \subset
B(h^{p_1} \times \cdots \times h^{p_N} \to h^{p})$. 

\item 
If 
$m=
- \sum_{j =1}^{N} 
\max \big\{ 
\frac{n}{\,p_j\,},\, \frac{n}{\,2\,}  \big\}$,   
then 
$\op\left( S^{m}_{0,0}(\R^n, N) \right) \subset
B(h^{p_1} \times \cdots \times h^{p_N} \to L^{\infty})$. 
\end{enumerate}
In the above assertions, if some of the $p_j$'s are equal to $\infty$, 
then the conclusions  
hold with the corresponding $h^{p_j}$ replaced by $bmo$. 
\end{thm}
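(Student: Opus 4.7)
The plan is to decompose the symbol dyadically in each frequency variable, expand each dyadic piece in Fourier series, and reduce the theorem to summable block estimates. Fix a partition of unity $\{\psi_k\}_{k\ge 0}$ on $\R^n$ with $\psi_k$ supported in $\{|\xi|\sim 2^k\}$ for $k\ge 1$, and write $\sigma = \sum_{\veck\in\N_0^N}\sigma_{\veck}$ with $\sigma_{\veck}(x,\vecxi) = \sigma(x,\vecxi)\prod_j \psi_{k_j}(\xi_j)$. The symbol bound for $S^{\vecm}_{0,0}$ gives $|\partial_x^\alpha \partial_{\xi_1}^{\beta_1}\cdots\partial_{\xi_N}^{\beta_N}\sigma_{\veck}| \lesssim \prod_j 2^{k_j(m_j - |\beta_j|)}$. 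Expanding each $\sigma_{\veck}$ in Fourier series on the product cube containing its $\vecxi$-support yields
\begin{equation*}
T_{\sigma_{\veck}}(f_1,\dots,f_N)(x)
= \sum_{\vecnu \in \ZnN} c_{\veck,\vecnu}(x)\,\prod_{j=1}^N (f_j * \Phi^{(j)}_{k_j,\nu_j})(x),
\end{equation*}
where $\|c_{\veck,\vecnu}\|_{L^\infty}\lesssim \prod_j 2^{k_j m_j}(1+|\nu_j|)^{-M}$ for arbitrarily large $M$, and each $\Phi^{(j)}_{k_j,\nu_j}$ is an $L^1$-normalized bump whose Fourier transform is supported in $\{|\xi|\sim 2^{k_j}\}$. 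The rapid $\vecnu$-decay reduces the problem to the model pieces with $\vecnu=\mathbf{0}$.

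For each Littlewood--Paley block $f_j^{(k_j)} := f_j * \Phi^{(j)}_{k_j}$, invoke the standard Hardy-space block estimate
\begin{equation*}
\|f_j^{(k_j)}\|_{L^{q_j}} \lesssim 2^{k_j n(1/p_j - 1/q_j)}\|f_j\|_{h^{p_j}}
\qquad (q_j \ge p_j),
\end{equation*}
valid with $bmo$ in place of $h^\infty$ when $p_j=\infty$, and take $q_j := \max\{p_j,2\}$. Combined with H\"older on the output at exponent $r$ defined by $1/r = \sum_j 1/q_j$, this produces
\begin{equation*}
\|T_{\sigma_{\veck}}(f_1,\dots,f_N)\|_{L^r}
\lesssim \prod_j 2^{\,k_j\bigl(m_j + \max\{n/p_j,\,n/2\} - n/2\bigr)}\prod_j \|f_j\|_{h^{p_j}}.
\end{equation*}
The upper bound $m_j < n/2-\max\{n/p_j,n/2\}$ in the hypothesis renders every exponent strictly negative, so the sum in $\veck$ is a convergent geometric series. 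Because $r$ may exceed $p$ and the target is the Hardy space $h^p$ rather than $L^r$, I would then upgrade this $L^r$ estimate to an $h^p$ estimate using the frequency localization of the dyadic pieces and either the atomic decomposition or the square-function characterization of $h^p$; the lower bound $m_j > -\max\{n/p_j,n/2\}$ supplies exactly the margin needed to absorb Peetre-type maximal functions in this reconstruction step.

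For assertion (2) the target is $L^\infty$, and the argument is modified by replacing the $L^{q_j}$-block estimate by the endpoint bound $\|f_j^{(k_j)}\|_{L^\infty}\lesssim 2^{k_j \max\{n/p_j,n/2\}}\|f_j\|_{h^{p_j}}$ (with $bmo$ for $h^\infty$). At the critical order $m=-\sum_j\max\{n/p_j,n/2\}$ the natural product estimate would only deliver a pointwise bound of weak-$L^1$-type on each factor, and it is here that Lemma \ref{productLweak1} of Section \ref{section2} is used; its validity only for $N\ge 2$ is precisely the source of the $L^\infty$-versus-$bmo$ improvement highlighted in the Remark.

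The main obstacle I expect is the passage from a Lebesgue bound to a genuine $h^p$ bound when $p\le 1$ or when some $p_j=\infty$: this is where both the upper and lower inequalities on each $m_j$ are genuinely used. Once the reconstitution of $h^p$ from frequency-localized blocks is handled (via atoms and the strict lower bound on $m_j$), the geometric summation in $\veck$ closes the argument for (1); the endpoint (2) then follows via the $N\ge 2$-specific product-of-weak-$L^1$ lemma.
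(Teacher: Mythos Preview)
Your proposal contains a fundamental error at the very first step. You assert that for $\sigma\in S^{\vecm}_{0,0}$ one has
\[
\bigl|\partial_x^{\alpha}\partial_{\xi_1}^{\beta_1}\cdots\partial_{\xi_N}^{\beta_N}\sigma_{\veck}\bigr|
\lesssim \prod_{j} 2^{\,k_j(m_j-|\beta_j|)}.
\]
This is false: the defining property of the class $S^{\vecm}_{0,0}$ is precisely that $\xi$-derivatives yield \emph{no} gain whatsoever; the correct bound is only $\prod_j 2^{k_j m_j}$, independent of $\vecbeta$. (What you wrote is the $S_{1,0}$ estimate.) Consequently, when you expand $\sigma_{\veck}$ in Fourier series on a cube of side $\sim 2^{k_j}$ in the $j$-th variable, integration by parts gives only
\[
|c_{\veck,\vecnu}(x)|\lesssim \prod_{j} 2^{k_j m_j}\Bigl(\tfrac{2^{k_j}}{1+|\nu_j|}\Bigr)^{M},
\]
which is useless for $|\nu_j|\lesssim 2^{k_j}$ and certainly does not give the claimed $(1+|\nu_j|)^{-M}$ uniformly in $\veck$. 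The reduction to the ``model pieces with $\vecnu=\mathbf{0}$'' therefore fails, and with it the geometric summation and everything downstream.

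This is not a technicality but the heart of the matter: $S_{0,0}$ symbols are smooth at \emph{unit} scale, not at dyadic scale. The paper's proof is organized around this fact. It decomposes $\sigma$ by a unit-lattice partition $\sum_{\vecnu\in(\Z^n)^N}\prod_j\varphi(\xi_j-\nu_j)$ and expands each piece in Fourier series on a \emph{unit} cube, where the uniform derivative bounds of $S_{0,0}$ do produce rapid coefficient decay (the factors $\langle\veck\rangle^{-2L}$ in \eqref{sigmanu}). This unit-scale decomposition leads directly to Wiener amalgam norms $W^{p,q}_s$, and the embeddings of Lemma~\ref{Waembd} then handle the passage between $h^{p_j}$, $bmo$, and $W^{p,q}$ in one stroke, dispensing entirely with the $L^r\to h^p$ reconstruction step that you flag as the ``main obstacle''. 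The role you assign to the strict inequalities $-\max\{n/p_j,n/2\}<m_j<n/2-\max\{n/p_j,n/2\}$ is played in the paper by the hypothesis $-n/2<a_j<0$ of Lemma~\ref{productLweakp'}, a weighted $\ell^2\times\cdots\times\ell^2\to\ell^{r'}$ convolution inequality that does the real quantitative work; Lemma~\ref{productLweak1} plays the analogous role at the $L^\infty$ endpoint, as you correctly anticipated.
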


The next theorem covers all $N\ge 1$. 
This theorem is a slightly refined version of 
the `only if' part of 
Theorem \ref{main-thm-hp} 
since 
$H^p \subset h^p$ and $L^{\infty} \subset bmo \subset BMO$.

\begin{thm}\label{main-thm-necessity}
Let $N\ge 1$,  
$0 < p, p_1,\dots, p_N \le \infty$,  
$1/p \le 1/p_1 + \cdots + 1/p_N$, and  
$m\in \R$. 
If 
\begin{equation*}
\op ((S^{m}_{0,0}(\R^n, N) )^{\times}) 
\subset 
B(H^{p_1} \times \cdots \times H^{p_N} \to L^{p}), 
\end{equation*}
with $L^p$ is replaced by $BMO$ when $p=\infty$, 
then \eqref{criticalm} holds. 
\end{thm}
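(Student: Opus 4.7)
The plan is to establish \eqref{criticalm} by reducing it to the two necessary inequalities
\begin{align*}
\text{(I)}\quad & m + \sum_{j=1}^{N}\max\{n/p_j,\,n/2\} \le n/2, \\
\text{(II)}\quad & m + \sum_{j=1}^{N}\max\{n/p_j,\,n/2\} \le n/p,
\end{align*}
where (II) is a separate constraint only when $p > 2$ and is otherwise subsumed by (I). Each is proved by constructing a one-parameter family of multipliers $\sigma_R \in (S^m_{0,0}(\R^n, N))^{\times}$ for $R \ge 1$, applying $T_{\sigma_R}$ to a well-chosen test tuple $(f_1, \dots, f_N)$, and reading off the critical bound on $m$ after letting $R \to \infty$.

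For (II), a single-cluster construction works. Take
\begin{equation*}
\sigma_R(\xi_1, \dots, \xi_N) = R^m \prod_{j=1}^N \Psi\bigl(\xi_j/R - \theta_j\bigr),
\end{equation*}
with $\Psi$ a smooth bump supported near $0$ and unit vectors $\theta_1, \dots, \theta_N$ in general position. A direct chain-rule check shows $\sigma_R \in (S^m_{0,0}(\R^n, N))^{\times}$ uniformly in $R$, since each $\xi_j$-derivative costs only $R^{-1}$ while $(1+|\xi_1|+\cdots+|\xi_N|)^m \asymp R^m$ on the joint support. For the test functions, when $p_j \le 2$ I would take $H^{p_j}$-atoms of radius $R^{-1}$ modulated by $e^{iR\theta_j \cdot x}$ (so that $\widehat{f_j}$ localizes near $R\theta_j$ and $\|f_j\|_{H^{p_j}} \lesssim 1$); when $p_j > 2$, I would superimpose $M$ such functions with random signs and well-separated frequencies, so that Khintchine's inequality yields $\|f_j\|_{H^{p_j}} \lesssim M^{1/2}$. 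A direct computation of $T_{\sigma_R}(f_1, \dots, f_N)$ combined with a Khintchine-type lower bound on the output side then produces (II) after letting first $R$ and then $M$ tend to infinity.

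For (I), the same constructions apply, but now the output regime is $p \le 2$, so the gain on the target side comes not from the support of $T_{\sigma_R}(f_1, \dots, f_N)$ but rather from Plancherel on $L^2$ combined with H\"older's inequality, producing the sharp loss $n/2$. The hard part will be the accurate computation of $\|f_j\|_{H^{p_j}}$ for the superposed test functions, particularly when $p_j \le 1$ (where modulation is not an isometry on $H^{p_j}$, so that the molecular characterization or a careful atomic decomposition must be invoked) and when $p_j = \infty$ (where the stated $BMO$ substitute and a pairing against $H^1$ are required). The remainder of the argument amounts to a careful bookkeeping of the $R$- and $M$-exponents across the various sub-regimes of $(p, p_1, \dots, p_N)$; the endpoint $p = \infty$ with $BMO$ target demands an extra duality step to extract a useful lower bound on $\|T_{\sigma_R}(f_1, \dots, f_N)\|_{BMO}$.
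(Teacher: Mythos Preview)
Your outline has a genuine gap in the treatment of the inputs with $p_j>2$. With your random superposition $f_j=\sum_{k=1}^{M}r_k\,g_{j,k}$ of modulated bumps one does get $\|f_j\|_{L^{p_j}}\approx M^{1/2}$ by Khintchine, but the very same Khintchine step on the output also produces a factor $M^{1/2}$, and the two cancel: after sending $R\to\infty$ and then $M\to\infty$ you recover only $m\le n/p-\sum_j n/p_j$, not the required $m\le n/p-\sum_j\max\{n/p_j,n/2\}$. The loss $n/2$ on an input with $p_j>2$ is not detected by any finite superposition of bumps; it reflects the existence of $f_j\in L^{p_j}$ whose Fourier transform has size $\sim|\xi|^{-n/2-\epsilon}$ on a thick set of frequencies, which is a genuinely non-elementary fact. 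Likewise, your claim for (I) that ``Plancherel on $L^2$ combined with H\"older'' on the target side yields the sharp $n/2$ when $p<2$ does not hold: on a set of small measure H\"older bounds $\|g\|_{L^p}$ \emph{above} by $\|g\|_{L^2}$, not below, so no improvement over the single-cluster bound $n/p$ follows.

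The paper proceeds quite differently. It first reduces, by complex interpolation with the known estimate $\op(S^{n/2-Nn/2}_{0,0})\subset B(L^2\times\cdots\times L^2\to L^2)$, to the range $1<p_j<\infty$ and $0<p<\infty$, thereby disposing of all $H^{p_j}$, $p_j\le 1$, and $BMO$ difficulties in one stroke. In that range, for $p\le 2$ it takes \emph{fixed} test functions of Wainger type, whose Fourier coefficients behave like $|\ell|^{-b_j}e^{2\pi i|\ell|^{a_j}}$ with $b_j\to n/2$ when $p_j\ge 2$ and $b_j\to n-n/p_j$ when $p_j<2$; the randomness is placed on the \emph{multiplier}, via signs $r_{k_1+\cdots+k_N}(\omega)$ indexed by the output frequency. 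Khintchine on the output then yields an $\ell^2$ lower bound for the aggregated coefficients $d_k=\sum_{k_1+\cdots+k_N=k}(\cdots)$, and it is exactly this coherent aggregation over the $\sim 2^{An(N-1)}$ input tuples with a fixed sum that produces the correct exponent. The case $2<p<\infty$ with some $p_j\ge 2$ is then reduced to the previous one by duality (swapping the target with one such input), while the case $2<p<\infty$ with all $p_j<2$ is handled by a single-cluster argument that does match your construction for (II).
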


One of the main ideas of this paper is 
to use Wiener amalgam spaces. 
We first prove the estimate of 
pseudo-differential operators in 
Wiener amalgam spaces and then derive the 
estimate in $h^p$ spaces by using 
embedding relations between 
Wiener amalgam spaces and $h^p$. 
Similar method is also used 
in the papers \cite{KMT-arxiv} and \cite{KMT-arxiv-2} 
by the same authors.

\subsection{Function spaces, etc.} 
\label{functionspaces}

We give the definitions and properties 
of some function spaces 
and give some notations.

$L^p(\R^n)$, $0<p\le \infty$, 
denotes the usual 
Lebesgue space on $\R^n$. 
$L^{p,q}(\R^n)$, $0<p< \infty$, $0<q \le \infty$, 
denotes the Lorentz space; 
see, {\it e.g.}, \cite[Chapter 1, Section 1.4]{grafakos 2014c}. 
In particular, 
the space $L^{p,\infty} (\R^n)$, $0<p<\infty$, 
consists of all measurable functions 
$f$ on $\R^n$ such that 
\[
\|f\|_{L^{p,\infty}}
=\sup_{0<\lambda<\infty} 
\lambda \left| 
\{
x \in \R^n 
\mid |f(x)| > \lambda 
\}
\right|^{1/p} 
<\infty, 
\]
where $|E|$ denotes the Lebesgue measure of $E\subset \R^n$. 
It holds that 
$L^{p,p}(\R^n)=L^{p}(\R^n)$ for $0<p<\infty$.

For $0<q<\infty$, 
$\ell^q (\Z^n)$ 
denotes the class 
of all  
complex sequences $a=\{a_k\}_{k\in \Z^n}$ 
such that 
$ \| a \|_{ \ell^q (\Z^n)} 
= 
\left( \sum_{ k \in \Z^n } 
| a_k |^q \right)^{ 1/q } <\infty$.

Let $\phi \in \calS(\R^n)$ be such that
$\int_{\R^n}\phi(x)\, dx \neq 0$ and 
let 
$\phi_t(x)=t^{-n}\phi(x/t)$ for $t>0$. 
The space $H^p= H^p(\R^n)$, $0<p\leq\infty$,  
consists of
all $f \in \calS'(\R^n)$ such that 
$\|f\|_{H^p}=\|\sup_{0<t<\infty}|\phi_t*f|\|_{L^p}
<\infty$. 
The space $h^p= h^p(\R^n)$, $0<p\leq\infty$,  
consists of
all $f \in \calS'(\R^n)$ such that 
$\|f\|_{h^p}=\|\sup_{0<t<1}|\phi_t*f|\|_{L^p}
<\infty$.
It is known that $H^p$ and $h^p$ 
do not depend on the choice of the function $\phi$ 
up to the equivalence of quasinorm.

Obviously $H^p \subset h^p$. 
If $1 < p \leq \infty$, 
then 
$H^p = h^p = L^p$ with equivalent norms. 
If $0 < p \le 1$, then 
the inequality 
$\|f\|_{L^p}\lesssim \|f\|_{h^p}$ 
holds for all 
$f \in h^{p}$ which are 
defined by locally  integrable 
functions on $\R^n$.

The space $BMO=BMO(\R^n)$ consists of
all locally integrable functions $f$ on $\R^n$ 
such that
\[
\|f\|_{BMO}
=\sup_{R}\frac{1}{|R|}
\int_R |f(x)- f_{R}|\, dx
<\infty,
\]
where $f_R=|R|^{-1}\int_R f(x) \, dx$ 
and $R$ ranges over all the cubes in $\R^n$.
The space $bmo=bmo(\R^n)$ consists of
all locally integrable functions $f$ on $\R^n$ 
such that
\[
\|f\|_{bmo}
=\sup_{|R| \le 1}\frac{1}{|R|}
\int_{R}|f(x)-f_R|\, dx
+\sup_{|R|\geq1}\frac{1}{|R|}
\int_R |f(x)|\, dx
<\infty,
\]
where $R$ denotes cubes in $\R^n$.

The embedding  
$L^{\infty} \subset bmo \subset BMO$ holds.

The definition of 
Wiener amalgam spaces 
together with 
their embedding properties 
will be given in Section \ref{section2}. 
\vs

The following notations are 
used throughout the paper.

For two nonnegative functions $A(x)$ and $B(x)$ defined 
on a set $X$, 
we write $A(x) \lesssim B(x)$ for $x\in X$ to mean that 
there exists a positive constant $C$ such that 
$A(x) \le CB(x)$ for all $x\in X$. 
We often omit to mention the set $X$ when it is 
obviously recognized.  
Also $A(x) \approx B(x)$ means that
$A(x) \lesssim B(x)$ and $B(x) \lesssim A(x)$.

$C_{0}^{\infty}(\R^d)$ denotes 
the set of all the $C^{\infty}$ functions on $\R^d$ 
that have compact supports.

The symbols $\calS (\R^d)$ and 
$\calS^\prime(\R^d)$ denote 
the Schwartz class of rapidly 
decreasing smooth functions
and 
the space of tempered distributions 
on $\R^d$, respectively. 
The Fourier transform and the inverse 
Fourier transform of $f \in \calS(\R^d)$ are defined by
\begin{align*}
\mathcal{F} f  (\xi) 
&= \widehat {f} (\xi) 
= \int_{\R^d}  e^{-i \xi \cdot x } f(x) \, dx, 
\\
\mathcal{F}^{-1} f (x) 
&
= \frac{1}{(2\pi)^d} \int_{\R^d}  e^{i x \cdot \xi } f( \xi ) \, d\xi,
\end{align*}
respectively. 
For $m \in \calS^\prime (\R^d)$, 
the linear Fourier multiplier operator $m(D)$ is defined by
\begin{equation*}
m(D) f 
=
\mathcal{F}^{-1} \left[m \cdot \mathcal{F} f \right].
\end{equation*}

For $1\le p\le \infty$, the conjugate index $p^{\prime}$ 
is defined by 
$1/p + 1/p^{\prime}=1$.

For $x \in \R^d$, 
we write 
$\langle x \rangle = (1 + |x|^2)^{1/2}$. 
\vs

The contents of the rest of the paper is as follows. 
In Section \ref{section2}, 
we recall some embedding relations 
between Wiener amalgam spaces, $h^p$, and $bmo$ 
and prove some 
inequalities that are used in the proof of Theorem \ref{main-thm-sufficiency}.  
In Sections \ref{section3} and \ref{section4}, 
we prove Theorems \ref{main-thm-sufficiency} and 
\ref{main-thm-necessity}, respectively. 
Notice that the main theorem, Theorem \ref{main-thm-hp}, 
directly follows from 
Theorems \ref{main-thm-sufficiency} and 
\ref{main-thm-necessity}.

\section{Preliminaries for the proof of Theorem 
\ref{main-thm-sufficiency}} 
\label{section2}

We begin with the definition of the Wiener amalgam spaces. 

\begin{defn}\label{Def-WienerAmalgam}
Let $\kappa\in C_{0}^{\infty}(\R^n)$ 
be a function such that 
\begin{equation}\label{condofkappa}
\left| \sum _{k\in \mathbb{Z}^{n}} \kappa(\xi-k) \right| \geq 1, 
\quad \xi \in \R^n.  
\end{equation}
Then for $0< p,q \leq \infty$ and $s\in \R$, 
the {\it Wiener amalgam space} $W^{p,q}_{s}$ is defined to be the 
set of all $f \in \calS'(\R^n)$ such that the 
quasinorm 
\begin{equation*}
\|f\|_{W^{p,q}_{s}} 
= \Big\| \big\|
\langle k \rangle^{s} \, \kappa(D-k)f(x)
\big\|_{\ell^{q}_{k}(\Z^n)} \Big\|_{L^p_{x}(\R^n)}
\end{equation*}
is finite. 
If 
$s=0$, we write 
$W^{p,q} = W^{p,q}_{0}$. 
\end{defn}

The basic definition of the Wiener amalgam space  
is due to 
Feichtinger \cite{Fei-1981} and Triebel \cite{Tri-1983}.  
The usual definition of the Wiener amalgam space 
is given by 
a function $\kappa$ that satisfies 
$\sum _{k\in \mathbb{Z}^{n}} \kappa(\cdot-k) \equiv 1$.
However, a bit more general $\kappa$ 
satisfying \eqref{condofkappa} 
will be convenient in our argument.  
The space $W^{p,q}_{s}$ 
does not depend on the choice of the function $\kappa$ 
up to the equivalence of quasinorm. 
The space $W^{p,q}_{s}$ is a quasi-Banach space 
(Banach space if $1 \leq  p,q \leq \infty$) and 
$\mathcal{S} \subset W^{p,q}_{s} \subset \mathcal{S}^\prime$. 
If $0 < p,q < \infty$, then $\mathcal{S}$ is dense in $W^{p,q}_{s}$.

Several quasinorms equivalent to $\|f\|_{W^{p,q}_{s}}$ are known. 
Firstly, 
$\|f\|_{W^{p,q}_{s}}$ is equivalent to 
its continuous version: 
\begin{equation}\label{continuous-version}
\|f\|_{W^{p,q}_{s}}
\approx 
\Big\| \big\|
\langle y \rangle^{s} \, \kappa(D-y)f(x)
\big\|_{L^{q}_{y}(\R^n)} \Big\|_{L^p_{x}(\R^n)}. 
\end{equation}
If we write $\varphi = \calF^{-1}\kappa$, then 
$\kappa(D-y)f(x) = \big\langle 
\varphi (x-z) e^{-i y\cdot z} , f(z) 
\big\rangle e^{i y \cdot x}$ and hence 
\eqref{continuous-version} for $s=0$ implies 
\begin{align}
\|f\|_{W^{p,q}}
&
\approx 
\left\| 
\left\| 
\big\langle 
\varphi (x-z) e^{-i y\cdot z} , f(z) 
\big\rangle 
\right\|_{L^q_y (\R^n)}
\right\|_{L^p_x (\R^n)}
\label{continuous-version-2}
\\
&
= 
\left\| 
\left\| 
\big\langle 
\kappa (-y+\xi ) e^{i \xi \cdot x} , \widehat{f}(\xi) 
\big\rangle 
\right\|_{L^q_y (\R^n)}
\right\|_{L^p_x (\R^n)}. 
\label{modulation-hatf}
\end{align}
The quasinorm 
on the right hand side of 
\eqref{continuous-version-2} is 
sometimes 
adopted for the definition of $W^{p,q}$. 
The quasinorm 
\eqref{modulation-hatf} 
is the quasinorm of $\widehat{f}$ 
in the {\it modulation space}. 
If $q=2$, then 
\eqref{continuous-version-2} 
combined with Plancherel's theorem 
yields 
\begin{equation*}
\|f\|_{W^{p,2}}
\approx 
\left\| 
\left\| 
\varphi (x-z) f(z) 
\right\|_{L^2_z (\R^n)}
\right\|_{L^p_x (\R^n)},   
\end{equation*}
the right hand side of which 
is the quasinorm of $f$ in the $L^2$-based 
{\it amalgam space}. 
For these facts, see 
\cite{Fei-1981, Fei-1983, 
Tri-1983, Gro-book-2001, 
GS-2004, Kob-2006, WH-2007, 
GCFZ-2019}.  
For modulation space, 
see 
\cite{Fei-1983, GS-2004, Gro-book-2001, Kob-2006, WH-2007}.  
For amalgam spaces, 
see \cite{fournier stewart 1985} and 
\cite{holland 1975}.

The following embedding relations are known. 

\begin{lem}\label{Waembd}
The following embeddings hold: 
\begin{align}
&
W^{p_1,q_1} \hookrightarrow W^{p_2,q_2}, 
\quad 
0<p_1\le p_2 \le \infty, \;\; 0<q_1\le q_2 \le \infty; 
\label{emb-WW}
\\
&
L^{p} \hookrightarrow W^{p,p'}, 
\quad 1 \leq p \leq 2; 
\label{emb-LW-1}
\\
&
L^{p} \hookrightarrow W^{p,2}, 
\quad 
2 \leq p \le \infty; 
\label{emb-LW-2}
\\
&
h^p \hookrightarrow W^{p,2}_{n(1/2-1/p)}, 
\quad 0 < p \le 2; 
\label{emb-hW}
\\
&
bmo \hookrightarrow W^{\infty,2}; 
\label{emb-bmoW}
\\
&
W^{p,2} \hookrightarrow h^{p}, 
\quad 0 < p \leq 2; 
\label{emb-Wh}
\\
&
W^{p,p'} \hookrightarrow L^{p}, 
\quad 
2 \leq p \leq \infty.  
\label{emb-WL}
\end{align}
\end{lem}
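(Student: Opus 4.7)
The plan is to establish all seven embeddings using a small toolkit: the continuous representation \eqref{continuous-version-2} of the amalgam quasinorm, Plancherel in $y$ when $q=2$, Hausdorff--Young, Young's convolution inequality, duality, and the Littlewood--Paley characterization of $h^{p}$. Each assertion is essentially contained in the cited literature \cite{Fei-1981,Fei-1983,Tri-1983,Gro-book-2001,GS-2004,Kob-2006,WH-2007,GCFZ-2019}, so I aim only at a structured outline.

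I would first dispatch the Lebesgue-versus-amalgam embeddings; write $\varphi=\calF^{-1}\kappa$. Plancherel in $y$ rewrites $\|f\|_{W^{p,2}}$ as $\|(|\varphi|^{2}\ast|f|^{2})^{1/2}\|_{L^{p}_{x}}$, and for $p\ge 2$, Young's inequality $L^{1}\ast L^{p/2}\to L^{p/2}$ bounds this by $\|\varphi\|_{L^{2}}\|f\|_{L^{p}}$, giving \eqref{emb-LW-2}. For \eqref{emb-LW-1}, Hausdorff--Young applied to the inner Fourier transform $\widehat{\varphi(x-\cdot)f(\cdot)}(y)$ (valid for $1\le p\le 2$) yields $\|\kappa(D-y)f(x)\|_{L^{p'}_{y}}\lesssim\|\varphi(x-\cdot)f(\cdot)\|_{L^{p}_{z}}$, after which Fubini in $(x,z)$ produces the bound $\|\varphi\|_{L^{p}}\|f\|_{L^{p}}$. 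The reverse embedding \eqref{emb-WL} follows by duality from \eqref{emb-LW-1} together with the standard identification $(W^{p,q})^{*}=W^{p',q'}$ in the finite range. For the monotonicity \eqref{emb-WW}, the $q$-part is immediate from $\ell^{q_{1}}\hookrightarrow\ell^{q_{2}}$ pointwise in $x$; for the $p$-part I would use the STFT identity $|\kappa(D-y)f(x)|=|V_{\varphi}f(x,y)|$ and the reproducing-kernel bound $|V_{\varphi}f|\le|V_{\varphi}f|\ast|V_{\varphi}\varphi|$ on the phase space $\R^{2n}$, combined with mixed-norm Young's inequality on $\R^{2n}$ and the fact that $V_{\varphi}\varphi\in\calS(\R^{2n})$.

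The embeddings involving $h^{p}$ and $bmo$ rest on comparing the unit-cube square function $(\sum_{k}|\kappa(D-k)f|^{2})^{1/2}$ with the Littlewood--Paley square function $(\sum_{j\ge 0}|\Delta_{j}f|^{2})^{1/2}$ that characterizes $h^{p}$ (and its $bmo$-counterpart). For \eqref{emb-Wh}, a smooth partition of unity writes each dyadic block $\Delta_{j}$ as a uniformly bounded linear combination of $\kappa(D-k)$ over $|k|\sim 2^{j}$, so that the dyadic square function is dominated by the unit-cube one up to Peetre maximal factors, and the Fefferman--Stein vector-valued maximal inequality controls the resulting $L^{p}$ norm. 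For \eqref{emb-hW}, the reverse decomposition distributes each dyadic annulus over $O(2^{jn})$ unit cubes, and the $\ell^{2}$-loss incurred by this count is exactly absorbed by the weight $\langle k\rangle^{n(1/2-1/p)}$ through Bernstein. Embedding \eqref{emb-bmoW} follows either from a Carleson-measure argument or by dualizing the $p=1$ case of \eqref{emb-Wh} via $(h^{1})^{*}=bmo$. The principal obstacle is the sub-Banach range $p<1$, where the triangle inequality fails and the square-function identities must be supplemented by the atomic/molecular characterization of $h^{p}$; there one analyzes the action of the smooth cutoffs $\kappa(D-k)$ on individual atoms directly, uniformly in $k$, which is the most delicate point of the proof.
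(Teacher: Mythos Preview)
The paper does not prove this lemma at all: immediately after the statement it simply records that \eqref{emb-LW-1}, \eqref{emb-LW-2}, \eqref{emb-WL} are in \cite{CKS-2015}, that \eqref{emb-hW}, \eqref{emb-Wh} are in \cite{GWYZ-2017}, and that \eqref{emb-WW}, \eqref{emb-bmoW} can be found in \cite{CKS-2015, GCFZ-2019, GWYZ-2017, KMT-arxiv-2}. Your proposal therefore goes well beyond the paper's treatment by actually sketching the arguments.

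Your outline is essentially sound. The Plancherel/Young argument for \eqref{emb-LW-2}, the Hausdorff--Young plus Fubini argument for \eqref{emb-LW-1}, and the duality deduction of \eqref{emb-WL} are all correct and standard. For \eqref{emb-WW} the $q$-monotonicity is trivial, and your reproducing-kernel argument $|V_{\varphi}f|\lesssim |V_{\varphi}f|\ast|V_{\varphi}\varphi|$ combined with mixed Young is the standard route in the Banach range; in the range $p_{1}<1$ one should note that the argument is more naturally run on the discrete side, using that each $\kappa(D-k)f$ has Fourier support of uniformly bounded diameter so that Nikol'skij-type convolution estimates are available. Your duality argument for \eqref{emb-bmoW} via $(h^{1})^{*}=bmo$ and $(W^{1,2})^{*}=W^{\infty,2}$ is clean. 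The sketches for \eqref{emb-hW} and \eqref{emb-Wh} are the vaguest part: the idea of comparing the unit-cube square function with the dyadic one is right, but the bookkeeping (counting $O(2^{jn})$ cubes per annulus, absorbing the count via the weight $\langle k\rangle^{n(1/2-1/p)}$, and invoking Fefferman--Stein in the quasi-Banach range) would need to be written out carefully to be convincing, and you correctly flag the sub-Banach range as the most delicate point. None of this contradicts the paper, which defers all of it to the cited references.
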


The embeddings 
\eqref{emb-LW-1}, \eqref{emb-LW-2}, and 
\eqref{emb-WL} 
are given in \cite[Theorems 1.1 and 1.2]{CKS-2015}.  
The embeddings 
\eqref{emb-hW} and \eqref{emb-Wh} are 
given in \cite[Theorem 1.2]{GWYZ-2017}.  
For \eqref{emb-WW} and 
\eqref{emb-bmoW},  
see 
{\cite{CKS-2015, GCFZ-2019, GWYZ-2017, KMT-arxiv-2}.

\begin{lem}[{\cite[Lemma 2.2]{PS-1988}}]
\label{PSdec}
For each $M\in \N$, there exist 
functions $\{ \chi_{\ell} \}_{\ell \in \Z^n} \subset 
C_{0}^{\infty} (\R^n)$ such that 
\begin{align*}
&\supp \chi_{\ell} \subset [-1,1]^n, \qquad 
\sup_{\ell} \| \calF^{-1} \chi_\ell \|_{L^1} \lesssim 1, \\
&\sum_{\ell \in \Z^n} \langle \ell \rangle^{-2M} 
\chi_\ell (\zeta) \langle \zeta \rangle^{2M} = 1
\;\;\textrm{for all}\;\; \zeta \in \R^n.
\end{align*}
Here 
the implicit constant in 
$\lesssim$ depends only on 
$n$ and $M$. 
\end{lem}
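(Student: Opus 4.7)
The plan is to construct the $\chi_\ell$'s by twisting a smooth partition of unity on $\R^n$ by a power weight. First, I would fix $\psi\in C_0^\infty(\R^n)$ with $\supp\psi\subset[-1,1]^n$ and $\sum_{\ell\in\Z^n}\psi(\cdot-\ell)\equiv 1$ (a standard convolution-with-bump construction), and then set
\[
\chi_\ell(\zeta)=\psi(\zeta-\ell)\,\langle\ell\rangle^{2M}\,\langle\zeta\rangle^{-2M},\qquad\ell\in\Z^n.
\]
By design, $\chi_\ell\in C_0^\infty(\R^n)$ is supported in the translate $\ell+[-1,1]^n$ of the unit cube, and the partition identity
\[
\sum_{\ell\in\Z^n}\langle\ell\rangle^{-2M}\chi_\ell(\zeta)\,\langle\zeta\rangle^{2M}=\sum_{\ell\in\Z^n}\psi(\zeta-\ell)=1
\]
is immediate.

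The remaining—and main—task is the uniform $L^1$ bound on $\calF^{-1}\chi_\ell$. Translation in $\zeta$ corresponds to modulation in $x$ and hence preserves the $L^1$ norm of the inverse Fourier transform, so $\|\calF^{-1}\chi_\ell\|_{L^1}=\|\calF^{-1}h_\ell\|_{L^1}$ where
\[
h_\ell(\eta):=\chi_\ell(\eta+\ell)=\psi(\eta)\,\phi_\ell(\eta),\qquad \phi_\ell(\eta):=\langle\ell\rangle^{2M}\langle\ell+\eta\rangle^{-2M}.
\]
Each $h_\ell$ is supported in the fixed cube $[-1,1]^n$, and the standard integration-by-parts inequality gives, for any fixed integer $K>n$,
\[
\|\calF^{-1}h_\ell\|_{L^1}\lesssim\sum_{|\alpha|\le K}\|\partial^\alpha h_\ell\|_{L^\infty},
\]
with an implicit constant depending only on $n$ and $K$. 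It therefore suffices to bound $\|\partial^\alpha h_\ell\|_{L^\infty}$ uniformly in $\ell$ for $|\alpha|\le K$.

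By the Leibniz rule this further reduces to a uniform bound on $\partial^\gamma\phi_\ell$ over $[-1,1]^n$ for $|\gamma|\le K$. Since $|\eta|\le\sqrt n$ on the relevant region, $\langle\ell+\eta\rangle\approx\langle\ell\rangle$ uniformly in $\ell\in\Z^n$, so $\phi_\ell\approx 1$; the standard estimate $|\partial^\gamma\langle y\rangle^{-2M}|\lesssim\langle y\rangle^{-2M-|\gamma|}$ then yields $|\partial^\gamma\phi_\ell(\eta)|\lesssim\langle\ell\rangle^{2M}\langle\ell+\eta\rangle^{-2M-|\gamma|}\approx\langle\ell\rangle^{-|\gamma|}\lesssim 1$. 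This derivative bookkeeping—routine but the only actual calculation in the argument—is the main (mild) obstacle; everything else follows from the tautological choice of $\chi_\ell$.
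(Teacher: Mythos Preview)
The paper does not supply its own proof of this lemma; it is quoted verbatim from \cite[Lemma~2.2]{PS-1988} and simply used as a black box. Your construction is exactly the standard one and the argument is correct.

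One point worth flagging: the support condition as printed in the statement reads $\supp\chi_\ell\subset[-1,1]^n$, which your $\chi_\ell$ does \emph{not} satisfy---and indeed cannot, since if every $\chi_\ell$ vanished outside $[-1,1]^n$ the partition identity would fail for large $\zeta$. The intended condition is $\supp\chi_\ell\subset\ell+[-1,1]^n$, which is precisely what your construction gives and what the paper actually uses downstream (see \eqref{supp-hatQ}, where one needs $\supp\calF Q_{\boldsymbol{\nu},\boldsymbol{k},\ell}\subset\{\zeta:|\zeta-\ell|\lesssim 1\}$). With that reading, your proof is complete.
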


\begin{lem}
\label{productLweakp'}
Let $N \ge 2$, $1<r<\infty$, 
and let $a_1, \dots, a_N$ be real numbers satisfying 
$-n/2< a_j < 0$ and 
$\sum_{j=1}^N a_j=n/r-Nn/2$. 
Then 
the 
following inequality 
holds for all nonnegative functions 
$A_1, \, \dots, \, A_N$ on $\Z^n$: 
\[
\left\|
\sum_{\substack{\nu_1, \, \dots, \, \nu_N \in \Z^n, \\
\nu_1+\dots+ \nu_N = \mu} }
\, 
\prod_{j=1}^N (1+ |\nu_j| )^{a_{j}} A_j(\nu_j)
\right\|_{\ell^{r'}_{\mu}(\Z^n)}
\lesssim \prod_{j=1}^N
\|A_j\|_{\ell^2 (\Z^n) }.  
\]
\end{lem}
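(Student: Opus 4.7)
The plan is to view the inner sum as an $N$-fold discrete convolution and bound it via a weighted Young--O'Neil inequality in Lorentz sequence spaces. Setting $B_j(\nu) = (1+|\nu|)^{a_j} A_j(\nu)$, the left-hand side is precisely $\|B_1 \ast \cdots \ast B_N\|_{\ell^{r'}(\Z^n)}$, so the claim reduces to
\[
\|B_1 \ast \cdots \ast B_N\|_{\ell^{r'}(\Z^n)} \lesssim \prod_{j=1}^N \|A_j\|_{\ell^2(\Z^n)}.
\]
The guiding observation is that, since $-n/2 < a_j < 0$, the weight $(1+|\nu|)^{a_j}$ sits in the weak space $\ell^{p_j,\infty}(\Z^n)$ with $p_j = n/|a_j| \in (2,\infty)$ and uniform quasinorm bound.

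First I would pass from $A_j$ to $B_j$ by H\"older in Lorentz sequence spaces, which yields
\[
\|B_j\|_{\ell^{q_j,2}(\Z^n)} \lesssim \bigl\|(1+|\cdot|)^{a_j}\bigr\|_{\ell^{p_j,\infty}} \|A_j\|_{\ell^2} \lesssim \|A_j\|_{\ell^2}, \qquad \frac{1}{q_j} := \frac{1}{p_j} + \frac{1}{2} = \frac{|a_j|}{n} + \frac{1}{2}.
\]
The arithmetic of the indices is dictated by the hypothesis:
\[
\sum_{j=1}^N \frac{1}{q_j} = \frac{N}{2} - \frac{1}{n}\sum_{j=1}^N a_j = \frac{N}{2} - \left(\frac{1}{r} - \frac{N}{2}\right) = N - \frac{1}{r},
\]
so $\sum_j 1/q_j - (N-1) = 1/r'$, which is exactly the target convolution exponent.

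Next I would iterate the discrete O'Neil bilinear convolution inequality: for $f \in \ell^{s_1,t_1}$, $g \in \ell^{s_2,t_2}$ with $1/s_1 + 1/s_2 > 1$, one has $\|f\ast g\|_{\ell^{s,t}} \lesssim \|f\|_{\ell^{s_1,t_1}}\|g\|_{\ell^{s_2,t_2}}$ with $1/s = 1/s_1 + 1/s_2 - 1$ and $1/t = 1/t_1 + 1/t_2$. Applying this $N-1$ times to the $B_j$'s, the second indices add up to $N/2$ and the first indices to $\sum 1/q_j - (N-1) = 1/r'$, producing
\[
B_1 \ast \cdots \ast B_N \in \ell^{r',\, 2/N}(\Z^n)
\]
with the correct norm bound. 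Since $r > 1$ and $N\ge 2$ force $2/N \le 1 < r'$, the Lorentz embedding $\ell^{r',\,2/N} \hookrightarrow \ell^{r',\,r'} = \ell^{r'}$ finishes the proof.

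The hard part will be justifying the iteration: every intermediate O'Neil application requires its first-index reciprocals to sum to something strictly in $(1,2)$, equivalently $\sum_{j \le i} 1/q_{\sigma(j)} > i-1$ for each $2 \le i \le N$. I plan to handle this by sorting the $q_j$ so that $1/q_{\sigma(1)} \ge \cdots \ge 1/q_{\sigma(N)}$; then
\[
\sum_{j\le i} \frac{1}{q_{\sigma(j)}} \;\ge\; \frac{i}{N}\sum_{j=1}^N \frac{1}{q_j} \;=\; i - \frac{i}{Nr} \;>\; i - 1,
\]
because $i \le N < Nr$. This is exactly where the strict inequality $r > 1$ is consumed; everything else is bookkeeping of Lorentz second indices.
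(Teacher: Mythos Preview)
Your argument is correct and takes a somewhat different route from the paper's. The paper dualizes to an $(N+1)$-linear integral form on $\R^n$ and then invokes, as a black box, a multilinear Lorentz--space inequality established in \cite{KMT-arxiv-2}, specializing the parameters to $q_0=r$, $q_j=2$, and $f_j(x)=\langle x\rangle^{a_j}\in L^{-n/a_j,\infty}$. You instead stay on $\Z^n$ and build the bound from the \emph{bilinear} O'Neil convolution inequality by iteration, using the sorting trick to force every intermediate first index into $(1,\infty)$. Both approaches rest on the same Lorentz machinery --- the weight $(1+|\cdot|)^{a_j}$ sitting in the endpoint weak space of exponent $-n/a_j$ --- but yours is more self-contained and makes the role of the strict hypotheses transparent: $-n/2<a_j<0$ pins each $1/q_j$ in $(1/2,1)$ (so the partial sums stay below $i-1$ from above automatically), while $r>1$ is consumed precisely in the inequality $i-i/(Nr)>i-1$. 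The paper's route trades this iteration bookkeeping for a single multilinear Lorentz estimate, at the cost of citing an external result.
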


\begin{proof} 
(The following argument is 
a slight modification of the one given in 
\cite[Proof of Proposition 3.4]{KMT-arxiv-2}, 
where the case $r=2$ was treated.)  
By duality and 
by an appropriate extension of functions on 
$\Z^n$ to functions on $\R^n$, 
it is sufficient to prove the inequality 
\begin{align*}
\int_{(\R^n)^N} 
A_0(x_1+\cdots+x_N) 
\prod_{j=1}^N \langle x_{j}\rangle ^{a_j} A_{j}(x_{j})
\, dx_1 \cdots dx_N
\lesssim 
\|A_0\|_{ L^{r}(\R^n) }
\prod_{j=1}^N 
\|A_{j}\|_{ L^{2} (\R^n)}
\end{align*}
for 
all nonnegative functions 
$A_0, A_1, \dots, A_N$ on 
$\R^n$. 
In \cite[Proof of Proposition 3.4]{KMT-arxiv-2}, 
it is proved that 
the following inequality 
holds for 
all nonnegative functions 
$A_0, A_1, \dots, A_N, f_1, \dots, f_N$ on 
$\R^n$: 
\begin{align*}
&\int_{(\R^n)^N} 
A_0(x_1+\cdots+x_N) 
\prod_{j=1}^N f_{j}(x_{j}) A_{j}(x_{j})
\, dx_1 \cdots dx_N
\\&\lesssim 
\|A_0\|_{L^{q_0,s_0}(\R^n)}
\prod_{j=1}^N \|f_{j}\|_{L^{p_{j},s_{j}}(\R^n)} 
\|A_{j}\|_{L^{q_{j},t_{j}}(\R^n)}
\end{align*}
for 
\begin{align*}
&
\frac{1}{q_0} + \sum_{j=1}^N 
\left(\frac{1}{p_{j}}+\frac{1}{q_{j}}\right) 
=N, 
\\
&
0 < 1/q_0,1/p_{j}, 1/q_{j} < 1, \\
&
0 < {1}/{p_{j}}+{1}/{q_{j}} < 1,
\quad j=1,\dots,N,
\\&
s_0, s_j, t_j\in [1, \infty], \quad 
j=1,\dots, N,
\quad \text{and}
\quad 
\frac{1}{s_0} +
\sum_{j=1}^N 
\left(\frac{1}{s_{j}}+\frac{1}{t_{j}}\right) 
=1.
\end{align*}
We take 
\begin{align*}
&
q_0 = r, \quad 
q_1 = \cdots = q_{N} = 2, \\
&
s_0 = s_1 = \cdots = s_N = t_1= \cdots = t_{N-2} = \infty, \quad
t_{N-1} = t_N = 2.  
\end{align*}
Then the above result combined with the embedding 
$L^r \hookrightarrow L^{r,\infty}$, $r<\infty$, 
implies 
\begin{align*}
\int_{(\R^n)^N} 
A_0(x_1+\cdots+x_N) 
\prod_{j=1}^N f_{j}(x_{j}) A_{j}(x_{j})
\, dx_1 \cdots dx_N
\lesssim 
\|A_0\|_{ L^{r} }
\prod_{j=1}^N 
\|f_{j}\|_{ L^{p_{j},\infty} } 
\|A_{j}\|_{ L^{2} }
\end{align*}
for
\begin{align*}
\sum_{j=1}^N \frac{1}{p_{j}}
=\frac{\,N\,}{2} - \frac{1}{\,r\,} , \quad
0< \frac{1}{\,r\,}<1, 
\quad 
0 < \frac{1}{\,p_{j}\,} < \frac{1}{\,2\,} 
\quad (\, j=1,\dots,N\, ). 
\end{align*}
The desired inequality now follows if we take  
$p_j=-n/a_j$ and  
$f_j (x_j)=\langle x_j \rangle ^{a_j}$. 
\end{proof}

\begin{lem}\label{productLweak1}
Let $N \ge 2$. 
Then 
the following 
inequality 
holds for all nonnegative functions 
$A_1, \, \dots, \, A_N$ on $\Z^n$: 
\begin{equation}\label{General-Hilbert}
\sum_{
\nu_1, \,\dots, \,\nu_N \in \Z^n}
(1+ |\nu_1| + \cdots + |\nu_N| )^{-Nn/2} 
\prod_{j=1}^N 
A_j(\nu_j)
\lesssim \prod_{j=1}^N
\|A_j\|_{\ell^2 (\Z^n) } . 
\end{equation}
\end{lem}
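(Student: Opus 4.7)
The plan is to prove \eqref{General-Hilbert} by a dyadic decomposition of each $A_j$ according to the size of $|\nu_j|$, after which the resulting multi-scale sum can be rearranged as a convolution in the scale parameter and controlled by Cauchy--Schwarz and Young's inequality. For each $j$ decompose $A_j=\sum_{k\ge 0}A_j^{(k)}$, where $A_j^{(k)}$ is the restriction of $A_j$ to the dyadic annulus $\{\nu\in\Z^n : 2^{k-1}\le|\nu|<2^k\}$ for $k\ge 1$ and $A_j^{(0)}$ is the contribution at the origin. Setting $b_j^{k}:=\|A_j^{(k)}\|_{\ell^2(\Z^n)}$, one has $\sum_{k\ge 0}(b_j^{k})^2=\|A_j\|_{\ell^2}^2$. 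On the support of $A_j^{(k_j)}$ one has $1+|\nu_j|\approx 2^{k_j}$, so for tuples with $\nu_j$ in the $k_j$-th annulus, $1+|\nu_1|+\cdots+|\nu_N|\approx 2^M$ with $M:=\max_{j}k_j$. Since the annulus contains at most $\lesssim 2^{k_j n}$ lattice points, Cauchy--Schwarz gives $\|A_j^{(k_j)}\|_{\ell^1}\lesssim 2^{k_j n/2}b_j^{k_j}$.

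Inserting these estimates reduces the problem to bounding
\[
\sum_{k_1,\dots,k_N\ge 0}2^{-\frac{n}{2}\sum_{j=1}^{N}(M-k_j)}\prod_{j=1}^{N}b_j^{k_j}
\]
by $\prod_{j=1}^{N}\|b_j\|_{\ell^2}$. At least one index $j^*$ attains $k_{j^*}=M$, so this sum is bounded by $N$ times the sum restricted to, say, $j^*=1$. Fixing $k_1=M$ and summing over $k_2,\dots,k_N\in[0,M]$ rearranges the inner sum as
\[
\sum_{M\ge 0}b_1^{M}\prod_{j=2}^{N}(g\ast b_j)(M),\qquad g(d):=2^{-nd/2}\ichi_{\{d\ge 0\}}(d),
\]
where the convolution is on $\Z$ with $b_j$ extended by zero to negative integers. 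Since $g\in\ell^1\cap\ell^2$, Young's inequality yields both $\|g\ast b_j\|_{\ell^2}\lesssim\|b_j\|_{\ell^2}$ and $\|g\ast b_j\|_{\ell^\infty}\lesssim\|b_j\|_{\ell^2}$. Estimating one factor $g\ast b_j$ in $\ell^2$ and the remaining $N-2$ factors in $\ell^\infty$, and then applying Cauchy--Schwarz in the variable $M$ against $b_1$, gives the desired bound $\lesssim\prod_{j=1}^{N}\|b_j\|_{\ell^2}\le\prod_{j=1}^{N}\|A_j\|_{\ell^2}$.

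The main subtlety is that the naive AM--GM bound $(1+|\nu_1|+\cdots+|\nu_N|)^{-Nn/2}\le\prod_{j=1}^{N}(1+|\nu_j|)^{-n/2}$ is too crude: the weight $(1+|\nu|)^{-n/2}$ lies exactly on the boundary of $\ell^2(\Z^n)$ and applying Cauchy--Schwarz one variable at a time produces a logarithmic divergence. The dyadic argument circumvents this by exploiting that only the single maximal index carries the full factor $2^{-NnM/2}$, so that the non-maximal scales contribute only convergent geometric series. This is also precisely why the hypothesis $N\ge 2$ is indispensable: for $N=1$ the rearrangement collapses to $\sum_M b_1^M=\|b_1\|_{\ell^1}$, which cannot be dominated by $\|A_1\|_{\ell^2}$, consistent with the failure of the inequality mentioned in the preceding remark.
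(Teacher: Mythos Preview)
Your proof is correct and takes a genuinely different route from the paper's. The paper argues by induction on $N$: for $N\ge 3$ it applies Cauchy--Schwarz in $\nu_1$ to obtain
\[
\sum_{\nu_1}(1+|\nu_1|+\cdots+|\nu_N|)^{-Nn/2}A_1(\nu_1)
\lesssim \|A_1\|_{\ell^2}\,(1+|\nu_2|+\cdots+|\nu_N|)^{-(N-1)n/2},
\]
thereby reducing to the case $N-1$, and it treats the base case $N=2$ as a version of Hilbert's double-series inequality (via the $L^2$-boundedness of the Hilbert transform, applied coordinatewise when $n\ge 2$). Your argument instead performs a dyadic decomposition of each $A_j$, reduces the problem to a sum over scale indices $(k_1,\dots,k_N)$ weighted by $2^{-\frac{n}{2}\sum_j(\max_i k_i - k_j)}$, and controls this by a one-dimensional convolution with the geometric kernel $g(d)=2^{-nd/2}\ichi_{\{d\ge 0\}}$ together with Young's inequality. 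The advantage of your approach is that it is entirely self-contained and elementary---no appeal to the Hilbert transform is needed---and it treats all $N\ge 2$ in one stroke. The paper's approach, on the other hand, makes explicit that the inequality is, at bottom, a multilinear amplification of the classical Hilbert inequality, which is perhaps conceptually illuminating. Both arguments correctly pinpoint that $N\ge 2$ is essential: in yours because for $N=1$ the scale sum degenerates to $\|b_1\|_{\ell^1}$, in the paper's because the induction needs the $N=2$ base case and the Cauchy--Schwarz step is vacuous for $N=1$.
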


\begin{proof}
If $N=2$ and $n=1$, 
then the inequality 
reads as 
\[
\sum_{
\nu_1, \,\nu_2 \in \Z}
(1+ |\nu_1| + |\nu_2| )^{-1} 
A_1(\nu_1) 
A_2(\nu_2) 
\lesssim 
\|A_1\|_{\ell^2 (\Z) }
\|A_1\|_{\ell^2 (\Z) }. 
\]
This inequality is known by the name of Hilbert's inequality and is 
derived from 
the $L^2$-boundedness of 
the Hilbert transform; 
see, {\it e.g.}, \cite[Chapter 11, Theorem 11.4.1]{Garling}. 
If $N=2$ and $n\ge 2$, then, 
since 
\[
(1+ |\nu_1| + |\nu_2| )^{-n} 
\le 
\prod_{k=1}^{n} 
(1+ |\nu_{1,k}| + |\nu_{2,k}| )^{-1}, 
\quad 
\nu_j = (\nu_{j,1}, \dots, \nu_{j,n}), 
\] 
the inequality can be proved 
by repeated application of Hilbert's inequality.

We shall prove the inequality for $N \ge 3$. 
Applying the Cauchy--Schwarz inequality 
to the sum $\sum_{\nu_1 \in \Z^n}$, 
we obtain 
\begin{align*}
&(\text{the left hand side of \eqref{General-Hilbert}})
\\
&\le 
\sum_{\nu_2, \dots, \nu_{N} \in \Z^n}
\| A_{1} \|_{\ell^2}
\bigg( \sum_{\nu_{1}\in \Z^n }
\big( 1+ |\nu_1| + \cdots + |\nu_N|
\big)^{-Nn} 
\bigg)^{1/2}
\, 
\prod_{j=2}^{N} A_{j} (\nu_{j}) 
\\&\approx
\sum_{\nu_2, \dots, \nu_{N} \in \Z^n} 
\| A_{1} \|_{\ell^2} 
{ \big( 1+ |\nu_2| + \cdots + |\nu_N|\big )^{-(N-1)n/2} }
\, 
\prod_{j=2}^{N} A_{j} (\nu_{j}).
\end{align*}
Repeating this process $N-2$ times, 
we obtain 
\begin{align*}
&(\text{the left hand side of \eqref{General-Hilbert}})
\\
&\lesssim
\prod_{j=1}^{N-2}
\| A_{j} \|_{\ell^2} 
\sum_{\nu_{N-1}, \nu_{N} \in \Z^n}
{ \left( 1+ |\nu_{N-1}|+|\nu_{N}| \right)^{-n} } 
\, 
A_{N-1} (\nu_{N-1}) A_{N} (\nu_{N})  
=: (\ast). 
\end{align*}
Finally using the case $N=2$ of 
\eqref{General-Hilbert}, 
we obtain 
$(\ast)\lesssim 
\prod_{j=1}^{N}
\| A_{j} \|_{\ell^2}$. 
\end{proof}

\section{Proof of Theorem \ref{main-thm-sufficiency} }
\label{section3}

We use the following notation:   
$\vecxi=(\xi_1,\dots,\xi_N) \in (\R^n)^N$, 
$\vecnu=(\nu_1,\dots,\nu_N) \in (\Z^n)^N$, 
and 
$\veck=(k_1,\dots,k_N) \in (\Z^n)^N$. 
We also write 
$d\vecxi = d\xi_1 \cdots d\xi_N$ and 
$
\vecxi \cdot \veck = \xi_1 \cdot k_1 + \cdots + \xi_N \cdot k_N$.

\begin{proof}[Proof of Theorem \ref{main-thm-sufficiency}]

We first give some argument 
which can be applied 
to general multilinear pseudo-differential operators 
and prove some result concerning the estimate 
of the operators in Wiener amalgam spaces.

For the moment, we assume 
$\sigma \in S^{\vecm} (\R^n, N)$ 
with 
$\vecm \in \R^N$ or 
$\sigma \in S^{m} (\R^n, N)$ 
with 
$m \in \R$.

We shall follow the method of utilizing Fourier series expansion, 
which may go back at least to 
Coifman and Meyer \cite{CM-Ast, CM-AIF}. 
We take a function $\varphi$ such that 
\begin{equation*}
\varphi \in C_{0}^{\infty} (\R^n), \quad
\supp \varphi \subset [-1, 1]^{n}, 
\quad
\sum_{\nu\in\Z^n} \varphi (\xi-\nu) = 1  
\quad
(\xi \in \R^n)  
\end{equation*}
and 
decompose the symbol $\sigma$ as 
\begin{align*}
&
\sigma(x,\vecxi)=
\sum_{\vecnu \in \ZnN} 
\sigma_{\vecnu}(x,\vecxi), 
\\
&
\sigma_{\vecnu}(x,\vecxi)
=
\sigma(x,\vecxi) 
\prod_{j=1}^{N} \varphi (\xi_j-\nu_j). 
\end{align*}
If we define 
\[
S_{\boldsymbol{\nu}} (x,\vecxi) = 
\sum_{\vecmu \in \Z^n} \sigma_{\vecnu}(x,\vecxi - 2\pi \vecmu),  
\]
then 
$S_{\boldsymbol{\nu}}(x, \vecxi)$ 
is a $2\pi \Z^{Nn}$-periodic function 
with respect to the variable 
$\vecxi$ 
and $S_{\boldsymbol{\nu}}(x, \vecxi)=
\sigma_{\boldsymbol{\nu}}(x, \vecxi)$ for 
$\vecxi \in \boldsymbol{\nu} + [-\pi, \pi]^{Nn}$. 
Hence 
we can take a function $\widetilde{\varphi}$ such that 
\begin{equation*}
\widetilde\varphi \in C_{0}^{\infty} (\R^n), \quad
0 \le \widetilde\varphi \le 1, \quad 
\supp \widetilde\varphi \subset [-\pi, \pi]^n, 
\quad 
\widetilde\varphi = 1 \;\;\textrm{on}\;\; [-1, 1]^n, 
\end{equation*}
and 
\begin{equation*}
\sigma_{\boldsymbol{\nu}} (x,\vecxi)
=
S_{\boldsymbol{\nu}} (x,\vecxi)
\prod_{j=1}^{N} \widetilde\varphi(\xi_j-\nu_j). 
\end{equation*}
Expanding $S_{\boldsymbol{\nu}}(x, \vecxi)$ into 
the Fourier series with respect to $\vecxi$, 
we obtain   
\begin{align*}
&
\sigma_{\boldsymbol{\nu}}  (x,\vecxi)
= \sum_{\boldsymbol{k} \in\ZnN}
e^{i \vecxi \cdot \veck}
P_{\boldsymbol{\nu},\boldsymbol{k}} (x)  
\prod_{j=1}^{N} \widetilde\varphi(\xi_j-\nu_j), 
\\
&
P_{\boldsymbol{\nu},\boldsymbol{k}} (x) 
= 
\frac{1}{(2 \pi )^{Nn}} \int_{\boldsymbol{\nu}+[-\pi , \pi ]^{Nn}}
e^{-i \veck \cdot \veceta}
\sigma_{\boldsymbol{\nu}} (x,\veceta) 
\,d\veceta 
\end{align*}
Integration by parts gives 
\begin{align*}
&P_{\boldsymbol{\nu},\boldsymbol{k}} (x) 
=
\langle \boldsymbol{k} \rangle^{-2L}
Q_{\boldsymbol{\nu},\boldsymbol{k}} (x), 
\\
&Q_{\boldsymbol{\nu},\boldsymbol{k}} (x)
 =
\frac{1}{(2 \pi )^{Nn}} 
\int_{\boldsymbol{\nu}+[-\pi , \pi ]^{Nn}}
e^{-i \veck \cdot \veceta}
\big( I-\Delta_{\veceta} \big)^{L} 
\big\{ \sigma_{\boldsymbol{\nu}} (x,\veceta) \big\} 
\,d\veceta 
\end{align*}
We further decompose 
$Q_{\boldsymbol{\nu},\boldsymbol{k}}$ 
by using the partition of unity of  
Lemma \ref{PSdec} 
as 
\begin{align*}
&
Q_{\boldsymbol{\nu},\boldsymbol{k}} (x) = 
\sum_{\ell \in \Z^n} 
\langle \ell \rangle^{-2M} 
Q_{\boldsymbol{\nu},\boldsymbol{k},\ell} (x), 
\\
&
Q_{\boldsymbol{\nu},\boldsymbol{k},\ell} (x)= 
\calF^{-1} 
\left[ 
\chi_\ell (\zeta) 
\langle \zeta \rangle^{2M} 
 \widehat{Q_{\boldsymbol{\nu},\boldsymbol{k}}}(\zeta) 
 \right](x)
=
(\calF^{-1}\chi_{\ell} (x))\ast (I- \Delta_{x})^{M} 
Q_{\boldsymbol{\nu},\boldsymbol{k}} (x).  
\end{align*}
Thus 
$\sigma_{\nu}$ is written as 
\begin{equation}\label{sigmanu}
\sigma_{\boldsymbol{\nu}} (x,\vecxi)
= 
\sum_{\boldsymbol{k} \in\ZnN}\, 
\sum_{\ell \in \Z^n}\, 
\langle \boldsymbol{k} \rangle^{-2L}
\langle \ell \rangle^{-2M}
Q_{\boldsymbol{\nu},\boldsymbol{k}, \ell} (x)
e^{i \vecxi \cdot \veck} 
\prod_{j=1}^{N} \widetilde\varphi(\xi_j-\nu_j). 
\end{equation}

By \eqref{sigmanu}, 
\begin{align*}
&
T_{\sigma_{\boldsymbol{\nu}}}(f_1,\dots,f_N)(x)
\\
&
= 
\sum_{\boldsymbol{k} \in\ZnN}\, 
\sum_{\ell \in \Z^n}\, 
\langle \boldsymbol{k} \rangle^{-2L} 
\langle \ell \rangle^{-2M} 
Q_{\boldsymbol{\nu},\boldsymbol{k},\ell} (x)
\prod_{j=1}^{N}
\widetilde\varphi(D-\nu_j)f_j(x+ k_j) . 
\end{align*}
By writing  
\[
F^{j}_{\nu_j,k_j}(x) = \widetilde\varphi(D-\nu_j)f_j(x+ k_j), 
\]
we have 
\begin{equation*}
T_{\sigma_{\boldsymbol{\nu}}}(f_1,\dots,f_N)(x)
= \sum_{\boldsymbol{k} \in\ZnN}\, 
\sum_{\ell \in \Z^n}\, 
\langle \boldsymbol{k} \rangle^{-2L} 
\langle \ell \rangle^{-2M} 
Q_{\boldsymbol{\nu},\boldsymbol{k},\ell} (x)
\prod_{j=1}^{N}
F^{j}_{\nu_j,k_j}(x). 
\end{equation*}
Thus, since 
$\sigma = \sum_{\boldsymbol{\nu}} \sigma_{\boldsymbol{\nu}}$,  
\begin{equation}\label{TsigmaF}
T_{\sigma}(f_1,\dots,f_N)(x)
= \sum_{\boldsymbol{k} \in\ZnN}\, 
\sum_{\ell \in \Z^n}\, 
\langle \boldsymbol{k} \rangle^{-2L} 
\langle \ell \rangle^{-2M} 
\sum_{\boldsymbol{\nu}\in \ZnN} 
Q_{\boldsymbol{\nu},\boldsymbol{k},\ell} (x)
\prod_{j=1}^{N}
F^{j}_{\nu_j,k_j}(x). 
\end{equation}

The functions $Q_{\boldsymbol{\nu},\boldsymbol{k}, \ell} (x)$ have 
the following estimate. 
If $\sigma \in S^{\vecm}_{0,0}(\R^n, N)$ 
with $\vecm \in \R^N$, 
then the symbol 
$\sigma_{\vecnu}(x,\vecxi)$ satisfies 
\begin{equation*}
\left|\partial^{\alpha}_x 
\partial^{\beta_1}_{\xi_1} \cdots \partial^{\beta_N}_{\xi_N} 
\sigma_{\vecnu}(x,\vecxi)
\right|
\lesssim
\prod_{j=1}^{N}
\big( 1 + |\nu_j| \big)^{m_j}   
\end{equation*}
and hence $Q_{\boldsymbol{\nu},\boldsymbol{k}} (x)$ satisfies 
\[
\left| 
(I- \Delta_{x})^{M} 
Q_{\boldsymbol{\nu},\boldsymbol{k}} (x)
\right|
\lesssim 
\prod_{j=1}^{N}
\big( 1 + |\nu_j| \big)^{m_j},  
\]
which combined with 
$\|\calF^{-1}\chi_{\ell}\|_{L^1}\lesssim 1$ (Lemma \ref{PSdec}) 
implies 
\begin{equation}\label{Qnukellvecm}
\left| 
Q_{\boldsymbol{\nu},\boldsymbol{k}, \ell} (x) \right| 
\lesssim 
\prod_{j=1}^{N}
\big( 1 + |\nu_j| \big)^{m_j}. 
\end{equation}
If $\sigma \in S_{0,0}^{m}(\R^n,N)$ 
with $m\in R$, 
then 
\begin{equation*}
\left|\partial^{\alpha}_x 
\partial^{\beta_1}_{\xi_1} \cdots \partial^{\beta_N}_{\xi_N} 
\sigma_{\vecnu}(x,\vecxi)\right|
\lesssim
\big( 1 + |\nu_1|+ \cdots +|\nu_N| \big)^{m}  
\end{equation*}
and hence, by 
the same reason as above,  
\begin{equation}\label{Qnukellscalarm}
\left| 
Q_{\boldsymbol{\nu},\boldsymbol{k}, \ell} (x) \right| 
\lesssim 
\big( 1 + |\nu_1|+ \cdots + |\nu_N| \big)^{m}. 
\end{equation}
Notice that 
the implicit constants in 
\eqref{Qnukellvecm} 
and 
\eqref{Qnukellscalarm} 
do not depend on 
$\veck$ and $\ell$.

Let us consider 
the estimate of 
$W^{s,t}$-quasinorm of 
$T_{\sigma}(f_1,\dots,f_N)$ for $s,t \in (0, \infty]$. 
If $s,t \in (0, \infty]$ are given, 
taking the numbers 
$L$ and $M$ sufficiently large and using 
\eqref{TsigmaF}, we obtain 
\begin{align*}
\| T_{\sigma}(f_1,\dots,f_N) \|_{W^{s,t}} 
\lesssim
\sup_{\boldsymbol{k},\ell}
\left\| \sum_{\boldsymbol{\nu}} 
Q_{\boldsymbol{\nu},\boldsymbol{k},\ell}  
\prod_{j=1}^{N} 
F^{j}_{\nu_j,k_j}
\right\|_{W^{s,t}}. 
\end{align*}
Notice that 
\begin{align}
&\supp \calF Q_{\boldsymbol{\nu},\boldsymbol{k},\ell}
\subset \{\zeta\in\R^n: |\zeta - \ell | \lesssim 1 \} , 
\label{supp-hatQ}
\\
&
\supp \calF F^{j}_{\nu_j,k_j}
\subset \{\zeta\in\R^n: |\zeta - \nu_j | \lesssim 1 \} 
\label{supp-hatF}
\end{align}
and recall that the function $\kappa$ used in the definition of 
$W^{s,t}$-quasinorm has compact support. 
Hence 
we have 
\begin{align*}
&
\left\| 
\sum_{\boldsymbol{\nu}} 
Q_{\boldsymbol{\nu},\boldsymbol{k},\ell}  
\prod_{j=1}^{N} 
F^{j}_{\nu_j,k_j}
\right\|_{W^{s,t}}
\\
&
=
\left\| \left\|
\sum_{\vecnu:|\nu_1+\cdots+\nu_N+\ell-\mu|\lesssim 1} 
\kappa(D-\mu)\left[
Q_{\boldsymbol{\nu},\boldsymbol{k},\ell}
\prod_{j=1}^{N} F^{j}_{\nu_j,k_j}\right]
\right\|_{\ell^{t}_{\mu}(\Z^n)} \right\|_{L^{s}(\R^n)}
\\
&
\lesssim 
\sum_{\tau:|\tau|\lesssim 1} 
\left\| \left\| 
\kappa(D-\mu+\tau-\ell)\left[
\sum_{\vecnu:\nu_1+\cdots+\nu_N=\mu}
Q_{\boldsymbol{\nu},\boldsymbol{k},\ell}
\prod_{j=1}^{N} F^{j}_{\nu_j,k_j}\right]
\right\|_{\ell^{t}_{\mu}(\Z^n)} \right\|_{L^{s}(\R^n)}
\\
&
=:(\ast). 
\end{align*}
We write 
\[
h_{\mu}=
\sum_{\vecnu:\nu_1+\cdots+\nu_N=\mu}
Q_{\boldsymbol{\nu},\boldsymbol{k},\ell}
\prod_{j=1}^{N} F^{j}_{\nu_j,k_j}.  
\]
Then 
$\supp \widehat{h_\mu}
\subset 
\{ \zeta \mid |\zeta - \mu -  \ell| \lesssim 1\}$ 
by 
\eqref{supp-hatQ} and \eqref{supp-hatF}.   
Recall that 
$\supp \kappa$ is also compact. 
Hence by 
Nikol'skij's inequality (see, {\it e.g.}, 
\cite[Section 1.3.2, Remark 1]{triebel 1983}),  
we have 
\begin{align}
\left| \kappa(D-\mu+\tau-\ell)h_{\mu}(x) \right|
&\le
\left\| (\calF^{-1}\kappa) (y) h_\mu(x-y) \right\|_{L^1_y}
\nonumber 
\\
&
\lesssim 
\left\| (\calF^{-1}\kappa) (y) h_\mu(x-y) \right\|_{L^{\min(1,s,t)}_y}. 
\label{forminko}
\end{align}
The implicit constant in \eqref{forminko} 
can be taken depending only on $s, t$, $n$, 
and the diameters 
of $\supp \widehat{h_\mu}$ and 
$\supp \kappa$;  
in particular, the inequality \eqref{forminko} 
holds uniformly with respect to 
$\mu,\tau$, and $\ell$. 
By \eqref{forminko} and Minkowski's inequality, we have 
\[
(\ast)
\lesssim 
\left\| \left\| 
\left\| (\calF^{-1}\kappa) (y) h_\mu(x-y) \right\|_{L^{\min(1,s,t)}_y}
\right\|_{\ell^{t}_{\mu}(\Z^n)} \right\|_{L^{s}_{x}(\R^n)}
\lesssim
\left\| \left\| 
h_\mu
\right\|_{\ell^{t}_{\mu}(\Z^n)} \right\|_{L^{s}(\R^n)}.
\]
To sum up, we see that the inequality 
\begin{equation}\label{beforebdd} 
\| T_{\sigma}(f_1,\dots,f_N) \|_{W^{s,t}}
\lesssim 
\sup_{\boldsymbol{k},\ell}
\left\| \left\| 
\sum_{\vecnu:\nu_1+\cdots+\nu_N=\mu}
Q_{\boldsymbol{\nu},\boldsymbol{k},\ell}
\prod_{j=1}^{N} F^{j}_{\nu_j,k_j}
\right\|_{\ell^{t}_{\mu}(\Z^n)} \right\|_{L^{s}(\R^n)}
\end{equation}
holds for each $s,t \in (0,\infty]$. 
\vs

Now we shall proceed to the estimate  
of the operators considered in Theorem 
\ref{main-thm-sufficiency}. 
We introduce some notation.  
If 
$p_1, \dots, p_N\in (0, \infty]$ 
and function spaces 
$X_1, \dots, X_N, Y$ on $\R^n$ 
are given, then 
we define $p_0$ by 
$1/p_0=1/p_1 + \cdots + 1/p_N$, 
define  the sets $J$ and $J^c$ by 
\[
J=
\left\{ 
j \in \{1, \dots, N\} 
\mid 2 \le p_j \le \infty
\right\}, 
\quad 
J^{c}=\left\{ 
j \in \{1, \dots, N\} 
\mid 0<p_j <2
\right\}, 
\] 
and write 
the boundedness $T_{\sigma}: X_1 \times \cdots \times X_N \to Y$ 
as 
\[
T_{\sigma}: \prod_{j\in J} X_j \times \prod_{j\in J^c} X_j 
\to Y.   
\]

We devide the rest of the arguments into three parts. 
\vs

{\it Part I: Proof of Theorem \ref{main-thm-sufficiency} (1) 
for the case $0<p\le 2$.}  

Assume 
$\vecm \in \R^N$ satisfies 
the conditions of 
Theorem \ref{main-thm-sufficiency} (1) 
with $0<p\le 2$  
and assume  
$\sigma \in S_{0,0}^{\vecm}(\R^n,N)$.  
We shall prove the estimate 
\begin{equation}\label{ple2WA}
T_{\sigma} : 
\prod_{j \in J} W^{p_j,2} 
\times 
\prod_{j \in J^c } W^{p_j,2}_{n(1/2-1/p_j)}  
\to 
W^{p_0,2}. 
\end{equation}
If this is proved, 
then combining it with 
the embeddings 
\begin{align}
&
h^{p_j} \hookrightarrow W^{p_j,2} 
\quad \text{for}\quad  j\in J, 
\label{embeddingJ}
\\
&
h^{p_j} \hookrightarrow W^{p_j,2}_{n(1/2 - 1/p_j)} 
\quad \text{for}\quad  j\in J^c, 
\label{embeddingJc}
\\
&
W^{p_0,2}\hookrightarrow W^{p,2} \hookrightarrow h^p
\quad \text{for}\quad p_0 \le p\le 2
\nonumber
\end{align}
(see Lemma \ref{Waembd}), 
we obtain the desired boundedness 
$T_{\sigma}: 
h^{p_1}\times \cdots \times h^{p_N} 
\to h^p$. 
Since the embedding 
$bmo \hookrightarrow W^{\infty, 2}$ also holds, 
if $p_j=\infty$ 
then we can replace 
$h^{p_j}$ by $bmo$.

To prove 
\eqref{ple2WA}, 
we use 
\eqref{beforebdd} with $s=p_0$, $t=2$, 
and use \eqref{Qnukellvecm}. 
Then we obtain 
\begin{align*}
&
\left\| 
T_{\sigma}(f_1, \dots, f_N)
\right\|_{W^{p_0, 2}}
\\
&
\lesssim 
\sup_{\boldsymbol{k}} 
\left\| \left\| 
\, 
\sum_{\vecnu:\nu_1+\cdots+\nu_N=\mu}
\, 
\prod_{j=1}^{N}
\big( 1 + |\nu_j| \big)^{m_j} 
\left| F^{j}_{\nu_j,k_j} \right|
\right\|_{\ell^{2}_{\mu}} \right\|_{L^{p_0}}
\\&=
\sup_{\boldsymbol{k}} 
\left\| \left\| 
\, 
\sum_{\vecnu:\nu_1+\cdots+\nu_N=\mu}
\, 
\prod_{j \in J} \langle \nu_j \rangle^{m_j}
\, 
\prod_{j \in J^c} \langle \nu_j \rangle^{m_j-n(1/2-1/p_j)} 
\, 
\prod_{j \in J} 
\left| F^{j}_{\nu_j,k_j} \right|
\, 
\prod_{j \in J^c} 
\left| \widetilde{F}^{j}_{\nu_j,k_j} \right|
\right\|_{\ell^{2}_{\mu}} \right\|_{L^{p_0}}
\\
&
=: (\dag), 
\end{align*}
where 
$\widetilde{F}^{j}_{\nu_j,k_j} = 
\langle \nu_j \rangle^{n(1/2-1/p_j)} F^{j}_{\nu_j,k_j}$. 
Now using 
Lemma \ref{productLweakp'} with $r=2$ 
and using H\"older's inequality with 
the indices 
$1/p_0 = 1/p_1+\cdots+1/p_N$, 
we obtain 
\begin{align*}
(\dag)
&\lesssim
\sup_{\boldsymbol{k}}\, 
\left\| 
\prod_{j \in J} \left\| F^{j}_{\nu_j,k_j} \right\|_{\ell^2_{\nu_j}}
\prod_{j \in J^c} \left\| \widetilde{F}^{j}_{\nu_j,k_j} \right\|_{\ell^2_{\nu_j}}
\right\|_{L^{p_0}}
\\
&
\lesssim 
\sup_{\boldsymbol{k}}\, 
\prod_{j \in J} 
\left\| \left\| 
F^{j}_{\nu_j,k_j} 
\right\|_{\ell^2_{\nu_j}} \right\|_{L^{p_j}}
\prod_{j \in J^c} 
\left\| \left\| 
\widetilde{F}^{j}_{\nu_j,k_j} 
\right\|_{\ell^2_{\nu_j}} \right\|_{L^{p_j}}. 
\end{align*}
Finally, the choice of $\widetilde{\varphi}$ implies  
\begin{align*}
\left\| \left\| 
F^{j}_{\nu_j,k_j} 
\right\|_{\ell^2_{\nu_j}} \right\|_{L^{p_j}}
&=
\left\| \left\| 
\widetilde\varphi(D-\nu_j)f_j(x+k_j) 
\right\|_{\ell^2_{\nu_j}} \right\|_{L^{p_j}}
\approx \| f_j \|_{W^{p_j,2}}, 
\\
\left\| \left\| 
\widetilde{F}^{j}_{\nu_j,k_j} 
\right\|_{\ell^2_{\nu_j}} \right\|_{L^{p_j}}
&=
\left\| \left\| \langle \nu_j \rangle^{n(1/2-1/p_j)} 
\widetilde\varphi(D-\nu_j)f_j(x+k_j) 
\right\|_{\ell^2_{\nu_j}} \right\|_{L^{p_j}}
\approx \| f_j \|_{ W^{p_j,2}_{n(1/2-1/p_j)} }, 
\end{align*}
and thus the estimate \eqref{ple2WA} follows. 
\vs

{\it Part II: Proof of Theorem \ref{main-thm-sufficiency} (1) 
for the case $2<p<\infty$.} 

Assume 
$\vecm \in \R^N$ satisfies 
the conditions of 
Theorem \ref{main-thm-sufficiency} (1) 
with $2<p< \infty$  
and assume  
$\sigma \in S_{0,0}^{\vecm}(\R^n,N)$.  
In this case, we shall prove the estimate 
\begin{equation}\label{pge2WA}
T_{\sigma} : 
\prod_{j \in J} W^{p_j,2} 
\times 
\prod_{j \in J^c } W^{p_j,2}_{n(1/2-1/p_j)}  
\to 
W^{p_0,p'}. 
\end{equation}
Combining this with 
the embeddings 
\eqref{embeddingJ}, \eqref{embeddingJc}, 
and 
\[
W^{p_0,p'} \hookrightarrow 
W^{p,p'}
\hookrightarrow L^{p}=h^{p}, 
\quad 
p_0\le p, \;\;  2<p<\infty 
\]
(see Lemma \ref{Waembd}), 
we obtain 
$T_{\sigma}: 
h^{p_1}\times \cdots \times h^{p_N} 
\to h^p$. 
If $p_j=\infty$, 
then we can replace 
$h^{p_j}$ by $bmo$.

To prove \eqref{pge2WA}, 
we use 
\eqref{beforebdd} with $s=p_0$, $t=p^{\prime}$, 
and also use 
\eqref{Qnukellvecm}. 
This yields 
\begin{align*}
&
\left\| 
T_{\sigma}(f_1, \dots, f_N)
\right\|_{W^{p_0, p^{\prime}}}
\\
&
\lesssim 
\sup_{\boldsymbol{k}} 
\left\| \left\| 
\, 
\sum_{\vecnu:\nu_1+\cdots+\nu_N=\mu}
\, 
\prod_{j \in J} \langle \nu_j \rangle^{m_j}
\, 
\prod_{j \in J^c} \langle \nu_j \rangle^{m_j-n(1/2-1/p_j)} 
\, 
\prod_{j \in J} 
\left| F^{j}_{\nu_j,k_j} \right|
\, 
\prod_{j \in J^c} 
\left| \widetilde{F}^{j}_{\nu_j,k_j} \right|
\right\|_{\ell^{p^{\prime}}_{\mu}} \right\|_{L^{p_0}}
\\
&=: (\dag\dag), 
\end{align*}
where 
$\widetilde{F}^{j}_{\nu_j,k_j}$ 
is the same as in Part I. 
Using 
Lemma \ref{productLweakp'} with $r=p$ 
and using H\"older's inequality with 
the indices 
$1/p_0 = 1/p_1+\cdots+1/p_N$, 
we obtain 
\begin{align*}
(\dag\dag)
&\lesssim
\sup_{\boldsymbol{k}}\, 
\left\| 
\prod_{j \in J} \left\| F^{j}_{\nu_j,k_j} \right\|_{\ell^2_{\nu_j}}
\prod_{j \in J^c} \left\| \widetilde{F}^{j}_{\nu_j,k_j} \right\|_{\ell^2_{\nu_j}}
\right\|_{L^{p_0}}
\\
&
\lesssim 
\sup_{\boldsymbol{k}}\, 
\prod_{j \in J} 
\left\| \left\| 
F^{j}_{\nu_j,k_j} 
\right\|_{\ell^2_{\nu_j}} \right\|_{L^{p_j}}
\prod_{j \in J^c} 
\left\| \left\| 
\widetilde{F}^{j}_{\nu_j,k_j} 
\right\|_{\ell^2_{\nu_j}} \right\|_{L^{p_j}}
\\
&
\approx 
\prod_{j \in J} 
\| f_j \|_{W^{p_j,2}}
\prod_{j \in J^c} 
\| f_j \|_{ W^{p_j,2}_{n(1/2-1/p_j)} }. 
\end{align*}
Thus 
\eqref{pge2WA} is proved. 
\vs

{\it Part III: Proof of Theorem \ref{main-thm-sufficiency} (2).} 

Assume $p=\infty$, 
$m \in \R$ is given as in 
Theorem \ref{main-thm-sufficiency} (2), 
and assume  
$\sigma \in S_{0,0}^{m}(\R^n,N)$.  
We shall prove the estimate 
\begin{equation}\label{pinftyWA}
T_{\sigma} : 
\prod_{j \in J} W^{p_j,2} 
\times 
\prod_{j \in J^c } W^{p_j,2}_{n(1/2-1/p_j)}  
\to 
W^{p_0,1}.    
\end{equation}
Combining this estimate 
with the embeddings 
\eqref{embeddingJ}, \eqref{embeddingJc}, 
and 
\[
W^{p_0,1} \hookrightarrow 
W^{\infty,1}
\hookrightarrow L^{\infty}
\]
(see Lemma \ref{Waembd}), 
we obtain the desired boundedness 
$T_{\sigma}: 
h^{p_1}\times \cdots \times h^{p_N} 
\to L^{\infty}$. 
If $p_j=\infty$, 
then we can replace 
$h^{p_j}$ by $bmo$.

By \eqref{Qnukellscalarm}, 
we have 
\begin{equation*}
\left| 
Q_{\boldsymbol{\nu},\boldsymbol{k}, \ell} (x) \right| 
\lesssim 
\big( 1 + |\nu_1|+ \cdots + |\nu_N| \big)^{m}
\end{equation*}
Observe that  
$m = -Nn/2 + \sum_{j\in J^c} (n/2- n/p_j)$ in the present case. 
Hence the above inequality implies 
\begin{equation}\label{Qnukellscalarm-2}
\left| 
Q_{\boldsymbol{\nu},\boldsymbol{k}, \ell} (x) \right| 
\lesssim 
\big( 1 + |\nu_1|+ \cdots + |\nu_N| \big)^{-Nn/2}
\prod_{j\in J^c} \langle \nu_j \rangle^{n (1/2 - 1/p_j)}. 
\end{equation}

We use 
\eqref{beforebdd} with $s=p_0$, $t=1$,   
and use \eqref{Qnukellscalarm-2} to obtain 
\begin{align*}
&
\left\| 
T_{\sigma}(f_1, \dots, f_N)
\right\|_{W^{p_0, 1}}
\\
&
\lesssim 
\sup_{\boldsymbol{k}} 
\left\| 
\left\| 
\, 
\sum_{\vecnu:\nu_1+\cdots+\nu_N=\mu}
\, 
(1+ |\nu_1|+ \cdots + |\nu_N|)^{-Nn/2}
\, 
\prod_{j \in J} \left| F^{j}_{\nu_j,k_j} \right|
\, 
\prod_{j \in J^c} \left| \widetilde{F}^{j}_{\nu_j,k_j} \right|
\, 
\right\|_{\ell^{1}_{\mu}} 
\right\|_{L^{p_0}}
\\
&=: (\dag\dag\dag), 
\end{align*}
where 
$\widetilde{F}^{j}_{\nu_j,k_j}$ is the same as in Part I. 
Now by Lemma \ref{productLweak1} and 
H\"older's inequality, 
we obtain 
\begin{align*}
(\dag\dag\dag)
&\lesssim 
\sup_{\veck} 
\left\| 
\, 
\prod_{j \in J} 
\left\| F^{j}_{\nu_j,k_j} \right\|_{\ell^2_{\nu_j}}
\prod_{j \in J^c} 
\left\| \widetilde{F}^{j}_{\nu_j,k_j} \right\|_{\ell^2_{\nu_j}}
\, 
\right\|_{L^{p_0}}
\\
&
\le 
\sup_{\veck} 
\prod_{j \in J} 
\left\| 
\left\| F^{j}_{\nu_j,k_j} \right\|_{\ell^2_{\nu_j}}
\right\|_{L^{p_j}}
\prod_{j \in J^c} 
\left\| \left\| \widetilde{F}^{j}_{\nu_j,k_j} \right\|_{\ell^2_{\nu_j}}
\right\|_{L^{p_j}}
\\
&
\approx 
\prod_{j \in J} 
\| f_j \|_{W^{p_j,2}}
\prod_{j \in J^c} 
\| f_j \|_{ W^{p_j,2}_{n(1/2-1/p_j)} }. 
\end{align*}
Thus 
\eqref{pinftyWA} is proved. 
This completes the proof of 
Theorem \ref{main-thm-sufficiency}. 
\end{proof}

\section{Proof of Theorem 
\ref{main-thm-necessity}}
\label{section4}

The key fact we shall use here is that 
for each $p_j\in (1, \infty)$ 
there exists a function 
$f_j \in L^{p_j} (\R^n)$ whose 
Fourier transform satisfies  
\begin{equation*}
\left|\widehat{f_j}(\xi)
\right| 
\approx 
\begin{cases}
{|\ell|^{n/p_j -n - \epsilon}} & \text{(when $1<p_j < 2$)} \\
{|\ell|^{-n/2 - \epsilon}} & \text{(when $2\le p_j <\infty$)} 
\end{cases}
\end{equation*}
for $\ell \in \Z^n\setminus \{0\}$ and $|\xi - \ell|\ll 1$. 
If $1<p_j \le 2$, 
the function $f_j$ can be easily found by setting 
$f_j (x)\approx |x|^{-n/p_j + \epsilon}$ for $|x|$ small. 
If $2<p_j < \infty$, the existence of $f_j$ is a classical fact 
but is not so elementary.  
Here to give a precise argument, 
we use the following result due to Wainger \cite{Wainger}.

\begin{lem}[{\cite[Theorem 10]{Wainger}}]
\label{Wainger}
If $1\le p \le \infty$, 
$0 < a < 1$, and 
$b>\frac{\,n\,}{2}+ (1-a) (\frac{n}{\,2\,}- \frac{n}{\,p\,})$, 
then there exists a function $g_{a,b} \in L^p ([-\pi, \pi]^n)$ 
whose Fourier coefficients are given by 
\begin{equation*}
\frac{1}{\,(2\pi)^{n}\, }\int_{[-\pi , \pi]^n} 
g_{a,b}(x) e^{-i \ell \cdot x}\,dx
=
\begin{cases}
{|\ell |^{-b}e^{2\pi i|\ell |^a}} & 
{\quad\text{for}\quad \ell \in \Z^n \setminus \{0\}}, 
\\ 
{0} & {\quad\text{for}\quad \ell =0}.
\end{cases}
\end{equation*}
\end{lem}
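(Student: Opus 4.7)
The plan is to construct $g_{a,b}$ as the periodization of a function $K$ on $\R^n$ whose Fourier transform equals $|\xi|^{-b} e^{2\pi i |\xi|^a}$ away from the origin. Specifically, I would fix $\psi \in C^\infty(\R^n)$ vanishing on $|\xi| \le 1/2$ and equal to $1$ on $|\xi| \ge 1$, set $m(\xi) = \psi(\xi) |\xi|^{-b} e^{2\pi i |\xi|^a}$, and let $K = \calF^{-1} m$. Once $K$ is shown to lie in $L^p(\R^n)$ with sufficient decay at infinity, the periodization $g_{a,b}(x) = (2\pi)^n \sum_{k \in \Z^n} K(x + 2\pi k)$ will belong to $L^p([-\pi,\pi]^n)$, and its Fourier coefficients on $\Z^n$ will equal $m(\ell)$ by Poisson summation, which is $|\ell|^{-b} e^{2\pi i |\ell|^a}$ for $\ell \ne 0$ and $0$ at $\ell = 0$ thanks to the choice of $\psi$.

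To estimate $K$ in $L^p$, I would perform a Littlewood--Paley decomposition $m = \sum_{j \ge 0} m_j$ with $m_j$ supported in $|\xi| \sim 2^j$, producing kernels $K_j = \calF^{-1} m_j$. After the rescaling $\xi = 2^j \eta$, each $K_j(x)$ becomes an oscillatory integral with large parameter $\lambda_j = 2^{ja}$ and phase proportional to $2\pi|\eta|^a + y\cdot\eta$, where $y = 2^{j(1-a)} x$. The critical point equation $2\pi a |\eta|^{a-2} \eta = -y$ has a solution in the annulus $|\eta| \sim 1$ (the support of the rescaled amplitude) precisely when $|y| \sim 1$, equivalently $|x| \sim 2^{-j(1-a)}$, and a direct computation shows that the Hessian of the phase there is non-degenerate with eigenvalues proportional to $|\eta|^{a-2}$ (signature $(n-1,1)$ since $0<a<1$).

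The heart of the matter is the application of the method of stationary phase, which I expect to yield the pointwise bound
\[
|K_j(x)| \lesssim 2^{j(n-b)} \cdot \lambda_j^{-n/2} = 2^{j(n - b - an/2)}
\]
on the critical annulus $|x| \sim 2^{-j(1-a)}$, together with rapid decay of $|K_j(x)|$ outside this annulus (in both regimes $|y| \gg 1$ and $|y| \ll 1$) via repeated non-stationary integration by parts in $\eta$. Integrating the pointwise bound over the critical annulus, whose volume is $\sim 2^{-j(1-a)n}$, then gives
\[
\|K_j\|_{L^p(\R^n)} \lesssim 2^{j(n - b - an/2 - (1-a)n/p)} = 2^{-j\left[(b - n/2) - (1-a)(n/2 - n/p)\right]}.
\]
The hypothesis $b > n/2 + (1-a)(n/2 - n/p)$ renders the exponent strictly negative, so $K = \sum_j K_j$ converges in $L^p(\R^n)$; the non-stationary estimates also provide enough control on $K$ at infinity for the periodization defining $g_{a,b}$ to converge in $L^p([-\pi,\pi]^n)$.

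The hard part will be the uniform execution of the oscillatory integral estimates in the previous paragraph: one must track the dependence on $j$ (equivalently on $\lambda_j$) carefully in the non-stationary regions to produce decay of the form $|K_j(x)| \lesssim 2^{j(n-b)} (1 + 2^{j(1-a)}|x|)^{-N}$ away from the critical annulus, with an analogous bound for $|x| \ll 2^{-j(1-a)}$, and one must verify that the standard stationary phase formula applies uniformly at the critical point, with Hessian determinant bounded away from both $0$ and $\infty$ on the support of the rescaled amplitude. Once these oscillatory integral estimates are in hand, assembling them into the $L^p$ bound on $K_j$ and summing over $j$ to conclude convergence of the periodization is routine.
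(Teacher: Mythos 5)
The paper itself contains no proof of this lemma: it is imported verbatim from Wainger's memoir (Theorem 10 there), so you are supplying an argument where the authors simply cite the literature. Your route --- truncate the multiplier near the origin, set $K=\calF^{-1}\bigl[\psi(\xi)|\xi|^{-b}e^{2\pi i|\xi|^a}\bigr]$, prove $K\in L^p(\R^n)$ with decay via a Littlewood--Paley decomposition and stationary phase, then periodize --- is the standard modern way to produce Wainger-type series, and is close in spirit to Wainger's original analysis (he runs the stationary-phase asymptotics directly on the lattice sum instead of on $\calF^{-1}m$). The structural points are right: the critical points occur exactly when $|x|\sim 2^{-j(1-a)}$; the Hessian of $2\pi|\eta|^a$ has eigenvalues $2\pi a(a-1)|\eta|^{a-2}$ and $2\pi a|\eta|^{a-2}$, hence is nondegenerate of signature $(n-1,1)$ on the annulus; the stationary bound $2^{j(n-b-an/2)}$ integrated over the critical annulus gives precisely the exponent $-\bigl[(b-n/2)-(1-a)(n/2-n/p)\bigr]$; and since the hypothesis forces $b>an/2$, the same scheme gives $K\in L^1(\R^n)$, which is what legitimizes reading off the Fourier coefficients of the periodization.

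There is, however, one quantitative slip that would break the summation as written: the anticipated off-annulus bound $|K_j(x)|\lesssim 2^{j(n-b)}(1+2^{j(1-a)}|x|)^{-N}$ contains no gain in the large parameter $\lambda_j=2^{ja}$. On the shells with $|x|$ comparable to $2^{-j(1-a)}$ but outside the critical annulus it only gives the pointwise size $2^{j(n-b)}$, whose $L^p$ contribution is $\approx 2^{j(n-b-(1-a)n/p)}$; summing over $j$ would then require $b>n-(1-a)n/p$, which exceeds the hypothesis threshold $\frac{n}{2}+(1-a)(\frac{n}{2}-\frac{n}{p})$ by $\frac{an}{2}$. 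The repair is exactly what non-stationary integration by parts delivers: where $\bigl|2\pi a|\eta|^{a-2}\eta+y\bigr|\gtrsim 1+|y|$ each integration by parts gains a factor $\bigl(\lambda_j(1+|y|)\bigr)^{-1}$, so in fact $|K_j(x)|\lesssim 2^{j(n-b)}\bigl(2^{ja}(1+2^{j(1-a)}|x|)\bigr)^{-N}$ away from the critical annulus, and $|K_j(x)|\lesssim 2^{j(n-b)}2^{-jaN}$ for $|x|\ll 2^{-j(1-a)}$. With these corrected bounds the off-annulus contributions are negligible, your $L^p$ estimate for $K_j$, the decay of $K$ at infinity, the convergence of the periodization, and hence the lemma all follow under the stated condition on $b$; the rest of your outline (Poisson summation for the coefficients, the choice of $\psi$ killing the coefficient at $\ell=0$) is fine.
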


Using this lemma, we shall prove the lemma below, 
which contains the 
essential part of Theorem \ref{main-thm-necessity}.

\begin{lem}\label{lem1-necessity}
Suppose $N\ge 1$, 
$1< p_1, \dots, p_N<\infty$, 
$0<p<\infty$, and $m\in \R$ satisfy 
\begin{equation}\label{opxLp}
\op(S^{m}_{0,0}(\R^n, N)^{\times}) \subset
B(L^{p_1} \times \cdots \times L^{p_N} \to L^{p}). 
\end{equation}
Then \eqref{criticalm} holds. 
\end{lem}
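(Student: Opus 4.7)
The plan is to exhibit, for each large parameter $R$, explicit test data — a symbol $\sigma_R \in S^m_{0,0}(\R^n, N)^{\times}$ with seminorms uniform in $R$, and inputs $f_j^R \in L^{p_j}$ with $\|f_j^R\|_{L^{p_j}} \lesssim 1$ — such that the boundedness hypothesis \eqref{opxLp} forces a quantitative bound on the output in $L^p$ which, via a scaling computation, yields \eqref{criticalm} in the limit $R \to \infty$. Implicitly, the boundedness \eqref{opxLp} together with a standard closed graph argument gives $\|T_{\sigma_R}\|_{L^{p_1}\times\cdots\times L^{p_N}\to L^p} \lesssim 1$ uniformly in $R$, once $\sigma_R$ has uniformly bounded $S^m_{0,0}$-seminorms.

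The test symbol is taken of the form $\sigma_R(\vecxi) = R^m \Phi(\vecxi/R)$ with $\Phi \in C_0^{\infty}((\R^n)^N)$ supported on $\{|\xi_j| \approx 1 \text{ for every } j\}$, possibly carrying a carefully chosen phase factor; the dilation implies $\sigma_R \in S^m_{0,0}(\R^n, N)^{\times}$ with seminorms independent of $R$. The construction of each $f_j^R$ depends on whether $p_j \ge 2$ or $p_j < 2$. When $p_j \ge 2$, $f_j^R$ is a frequency-localized, spatially-truncated variant of the Wainger function $g_{a_j, b_j}$ from Lemma \ref{Wainger} with $b_j = n/2 + \varepsilon$ and $a_j$ close to $1$, so $|\widehat{f_j^R}(\xi)|$ is of order $R^{-n/2 - \varepsilon}$ on unit bumps at lattice points $\ell$ with $|\ell| \approx R$. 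When $1 < p_j < 2$, $f_j^R$ is the concentrated bump $R^{n/p_j}\psi(Rx)$ or the power-type $\eta(x)|x|^{-n/p_j + \varepsilon}$ suggested in the text, with $|\widehat{f_j^R}(\xi)|$ of order $R^{-n/p_j^\prime}$ (up to $\varepsilon$-corrections) on a ball of radius $R$. To capture the $\min\{n/p, n/2\}$ factor on the output side, I dualize against $g^R \in L^{p^\prime}$ built by the same two-regime recipe applied to $p^\prime$ (when $p \ge 1$), or bypass duality with a direct pointwise lower bound on a fixed set of positive measure (when $0 < p < 1$).

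Substituting these into the Fourier representation of the bilinear pairing
\[
B_R = \int T_{\sigma_R}(f_1^R,\ldots,f_N^R)(x)\,\overline{g^R(x)}\,dx,
\]
carrying out the sum over lattice configurations $\vec\ell$ supported on the constraint $\sum_j \ell_j \approx 0$ (which contributes an effective volume factor $R^{n(N-1)}$), and collecting powers of $R$, one arrives at a lower bound of the form
\[
|B_R| \gtrsim R^{\,m\,+\,\sum_j \max\{n/p_j,\, n/2\}\,-\,\min\{n/p,\, n/2\}\,-\,C\varepsilon}.
\]
Combined with the uniform upper bound $|B_R| \lesssim \|T_{\sigma_R}\| \prod_j \|f_j^R\|_{L^{p_j}} \|g^R\|_{L^{p^\prime}} \lesssim 1$ (or its $p < 1$ variant), this forces the exponent to be nonpositive; letting $\varepsilon \to 0$ and $R \to \infty$ gives \eqref{criticalm}. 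Small consistency checks (for instance $N=1$ with $p_1=p=4/3$ giving $m^*=-n/4$, or $N=2$ with $p_1=p_2=4$, $p=2$ giving $m^*=-n/2$) confirm that the exponent is correctly calibrated against Theorem A and the remark for $N=1$.

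The principal technical obstacle is to ensure that the Wainger phases in the different $\widehat{f_j^R}$ combine constructively in the multi-variable lattice sum rather than cancel, so that the displayed lower bound genuinely holds with the claimed exponent. I expect to handle this either by tuning the Wainger exponents $a_j$ in each variable together with a matching phase factor in $\Phi$, via a stationary-phase analysis on the hypersurface $\sum_j \xi_j = 0$, or by replacing the deterministic Wainger phases with independent Rademacher signs and invoking Khintchine's inequality to recover the lower bound in average. A secondary issue is the subcase $0 < p < 1$ where duality is unavailable and the lower bound on $\|T_{\sigma_R}(f_1^R,\ldots,f_N^R)\|_{L^p}$ must be extracted directly from a pointwise estimate on a set of positive measure where the interference is constructive.
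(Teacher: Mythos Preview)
Your proposal identifies the right ingredients --- Wainger functions for $p_j\ge 2$, concentrated bumps for $p_j<2$, the closed graph theorem, and Khintchine --- but the way you have packaged them has a genuine gap in the main case (some $p_j\ge 2$). With a single dilated-bump symbol $\sigma_R(\vecxi)=R^m\Phi(\vecxi/R)$, the quantity $B_R$ becomes, after inserting the Wainger expansions, a lattice sum of roughly $R^{n(N-1)}$ terms each carrying the uncontrolled phases $\prod_j e^{2\pi i|\ell_j|^{a_j}}$. A smooth $\Phi$ varies only on scale $R$ and therefore cannot match these phases, which depend on the individual lattice points; there is no stationary-phase structure here either, since $|\ell|^{a}$ is not a smooth function of $\ell/R$. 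So neither of your first two fixes can produce the claimed lower bound $|B_R|\gtrsim R^{n(N-1)}\times(\text{single-term size})$. Your third fix --- randomizing the phases in the $f_j$ and using Khintchine --- runs into the difficulty that Wainger's theorem (Lemma~\ref{Wainger}) is about the \emph{specific} phases $e^{2\pi i|\ell|^{a}}$; if you replace them by random signs you lose control of $\|f_j\|_{L^{p_j}}$ for $p_j>2$ (or at least must redo that estimate), and you must then simultaneously select an $\omega$ for which both the lower bound and the input-norm bounds hold.

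The paper resolves this by moving the randomness and the phase cancellation into the \emph{symbol}: it takes $\sigma(\vecxi)=\sum_{\veck\in D} c_{\veck}(1+|\veck|)^m\prod_j\varphi(\xi_j-k_j)$ with $c_{\veck}=r_{k_1+\cdots+k_N}(\omega)\prod_j e^{-2\pi i|k_j|^{a_j}}$. The deterministic Wainger inputs then have fixed $L^{p_j}$ norms, the phases cancel exactly term by term, and $T_\sigma(f_1,\dots,f_N)$ collapses to a random trigonometric polynomial to which Khintchine applies directly, yielding an $\ell^2$ bound on the coefficients $d_k$. The case $2<p<\infty$ with some $p_j\ge 2$ is then handled by the transpose duality $T_\sigma\leftrightarrow T_\tau$ (swapping one input with the output), not by pairing against a separate $g^R$. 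Your scaling argument with $f_j^R=R^{n/p_j}\psi(R\cdot)$ does work, and matches the paper, in the residual case where all $p_j<2$ and $p>2$.
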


\begin{proof} 
We set 
$J= \{j\in \{1, \dots, N\} \mid 2 \le p_j < \infty\}$ 
and 
$J^c= \{j\in \{1, \dots, N\} \mid 1< p_j < 2\}$. 
The proof will be divided into three cases. 
\vs

{\it Case I: $0<p\le 2$.}  

From 
\eqref{opxLp}, 
the closed graph theorem implies that 
there exists a positive integer $M$ and a 
constant $C$ 
such that 
\begin{equation} \label{closedgraphthm}
\|T_\sigma\|_{L^{p_1} \times \cdots \times L^{p_N} \to L^{p}}
\le
C
\max_{ |\beta_1|, \cdots, |\beta_N| \le M}
\left\|
(1+|\vecxi|)^{-m}
\partial_{\vecxi}^{\vecbeta}
\sigma(\vecxi)
\right\|_{L^\infty(\R^{Nn})}
\end{equation}
for all $\sigma \in S^{m}_{0,0}(\R^n, N)^{\times}$.

Let $\varphi, \widetilde{\varphi} \in \calS(\R^n)$ be functions 
such that 
\begin{align*}
&\supp \varphi \subset [-1/4, 1/4]^n,
\quad
|\calF^{-1}\varphi(x)| \ge 1 \ \text{on} \ [-\pi, \pi]^n,
\\
&\supp \widetilde{\varphi} \subset [-1/2, 1/2]^n,
\quad
\widetilde{\varphi} = 1 \ \text{on} \ [-1/4, 1/4]^n. 
\end{align*}
Let 
$\{c_{\veck}\}_{\veck \in (\Z^n)^N} $ be 
a sequence of complex numbers 
satisfying 
$\sup_{\veck} |c_{\veck}| \le 1$ 
and let  
$D$ be a finite subset of $(\Z^{n})^N$.  
We consider the multiplier 
\[ 
\sigma(\vecxi)
=
\sum_{\veck \in D}
c_{\veck} (1+ |\veck|)^{m} 
\prod_{j=1}^{N}
\varphi(\xi_j -k_j). 
\]
As far as $\sup_{\veck} |c_{\veck}| \le 1$, 
the multiplier $\sigma$ satisfies 
\begin{equation*}
\left|
\partial_{\vecxi}^{\vecbeta}
\sigma(\vecxi) 
\right|
\le c 
(1+|\vecxi|)^{m}
\end{equation*}
with a constant $c$ 
independent of 
$\{ c_{\veck}\} $ and $D$, 
and hence \eqref{closedgraphthm} implies 
\begin{equation} \label{UniformboundTsigma}
\|T_\sigma\|_{L^{p_1} \times \cdots \times L^{p_N} \to L^{p}}
\lesssim 1 
\end{equation}
with the implicit constant independent of 
$\{c_{\veck} \}$ and $D$.

Let $g_{a,b}$ be the function given in 
Lemma \ref{Wainger}. 
We extend $g_{a,b}$ to 
the $2\pi \Z^n$-periodic function 
on $\R^n$ and denote it by the same symbol $g_{a,b}$. 
Using this $g_{a,b}$, we define the 
functions $f_{a_j, b_j}$, 
$j=1, \dots, N$, on $\R^n$ as follows. 
We take 
$\epsilon > 0$ and $0< a_j < 1$, and define 
\begin{equation*}
b_j = \frac{\,n\,}{2} + (1-a_j ) \bigg( 
\frac{n}{\,2\,} -\frac{n}{\,p_j\,}\bigg) + \epsilon  
\end{equation*}
and 
\begin{equation*} 
f_{a_j, b_j}(x) = g_{a_j, b_j}(x) \calF^{-1}\widetilde{\varphi} (x). 
\end{equation*}
From 
Lemma \ref{Wainger}, 
it follows that  
\begin{equation}\label{fjest}
f_{a_j, b_j} \in L^{p_j} (\R^n)
\end{equation}
and its Fourier transform is given by 
\begin{equation*}
\widehat{f_{a_j, b_j}}(\xi_j)
=
\sum_{\ell_j \in \Z^n \setminus \{0\}} 
|\ell_j|^{-b_j} e^{2\pi i |\ell_j|^{a_j}} 
\widetilde{\varphi}(\xi_j - \ell_j). 
\end{equation*}
From our choice of 
$\varphi$ and $\widetilde{\varphi}$, 
we have 
\begin{align*}
T_\sigma(f_{a_1, b_1}, \cdots, f_{a_N, b_N})(x)
&=
\sum_{\veck \in D}
c_{\veck}
(1+ |\veck|)^{m}
\prod_{j=1}^{N}
|k_j|^{-b_j} 
e^{2\pi i|k_j|^{a_j}}
\calF^{-1} \big( \varphi (\cdot  - k_j ) \big) (x)
\\
&=
\Big\{ \calF^{-1} \varphi (x)\Big\}^{N}
\sum_{\veck \in D}
c_{\veck}
(1+ |\veck|)^{m}
\prod_{j=1}^{N}
|k_j|^{-b_j} 
e^{2\pi i|k_j|^{a_j}}
e^{i k_j \cdot x}. 
\end{align*}

Let 
$\{r_k(\omega)\}_{k \in \Z^n}$ be mutually 
independent random variables on a probability space 
$(\Omega, P)$ such that 
$P \{r_k=1\}= P \{r_k=-1\}=1/2$. 
We consider the following  
$\{c_{\veck}\}$:  
\[
c_{\veck} 
= r_{k_1+\cdots+k_N}(\omega) \prod_{j=1}^{N}
e^{-2\pi i|k_j|^{a_j}}. 
\]
Then we have 
\begin{align}
T_\sigma(f_{a_1, b_1}, \cdots, f_{a_N, b_N})(x)
&=
\Big\{ \calF^{-1} \varphi (x) \Big\}^{N}
\sum_{\veck \in D}
r_{k_1+\cdots+k_N}(\omega)
(1+ |\veck|)^{m}
\prod_{j=1}^{N}
|k_j|^{-b_j} 
e^{i x \cdot k_j}
\nonumber 
\\
&=
\Big\{ \calF^{-1} \varphi (x) \Big\}^{N}
\sum_{k \in \Z^n }
r_{k}(\omega) 
e^{i x \cdot k} 
d_k
\label{Tsfajbj}
\end{align}
with
\begin{equation*}
d_k
=
\sum_{ 
\substack{ \veck \in D, 
\\
k_1+ \cdots + k_N =k
 } }
(1+ |\veck|)^{m}
\prod_{j=1}^{N}
|k_j|^{-b_j}. 
\end{equation*}
Since 
$D$ is a finite set, 
$d_k=0$ except for a finite number of $k\in \Z^n$. 
From \eqref{Tsfajbj} and from 
our choice of $\varphi$, 
it follows that 
\begin{align}
\|
T_\sigma(f_{a_1, b_1}, \cdots, f_{a_N, b_N})
\|_{L^p(\R^n)}
&=
\left\|
\Big\{ \calF^{-1} \varphi (x) \Big\}^{N}
\sum_{ k\in \Z^n }
r_{k}(\omega) 
e^{i x \cdot k} 
d_k
\right\|_{L^p_x(\R^n)}
\nonumber 
\\
&\ge
\left\|
\sum_{ k \in \Z^n }
r_{k}(\omega) 
e^{i x \cdot k} 
d_k
\right\|_{L^p_x ([-\pi, \pi]^n)}. 
\label{lowerestimate}
\end{align}

Now combining 
\eqref{UniformboundTsigma}, 
\eqref{fjest}, and 
\eqref{lowerestimate}, 
we obtain 
\begin{align*}
\left\|
\sum_{k \in \Z^n }
r_{k}(\omega) 
e^{i x \cdot k} 
d_k
\right\|_{L^p_x ([-\pi, \pi]^n)}
\lesssim 1, 
\end{align*}
where 
the constant in 
$\lesssim$ does not depend on  $\omega \in \Omega$ 
and $D$. 
Raising to the power $p$ and 
averaging over $\omega \in \Omega $, 
we obtain 
\begin{align*}
\int_{[-\pi, \pi]^n}
\int_{\Omega}
\bigg|
\sum_{k} 
r_k(\omega) e^{ix \cdot k} d_k
\bigg|^p
dP(\omega) 
dx \lesssim 1. 
\end{align*}
By Khintchine's inequality, the above inequality is equivalent to 
\begin{equation}\label{dkell2}
\left(
\sum_{k\in \Z^n} 
|d_k|^2
\right)^{1/2}
\lesssim 1. 
\end{equation}
Notice that the implicit constant in \eqref{dkell2} 
does not depend on the 
finite set 
$D\subset (\Z^n)^N$.

We take the finite set $D\subset (\Z^n)^N$ defined as follows. 
We take a sufficiently large number $L>0$ 
and for sufficiently large $A \in \N$  
we set 
\begin{align*}
D=D_{A}
=
\Big\{
\veck \in (\Z^n)^N
\ :\ 
&
2^{A-1}\le |k_1 + \cdots + k_N| \le 2^{A+1} ,
\\&
2^{A-L-1} \le |k_j| \le 2^{A-L}, \quad j=1,\cdots,N-1
\Big\}. 
\end{align*}
If 
$L>0$ is chosen sufficiently large, 
then we have 
\[
\veck \in D_A 
\; \Rightarrow \; 
2^{A-2}\le |k_N| \le 2^{A+2}, 
\]
and thus 
for all $k\in \Z^n$ satisfying 
$2^{A-1}\le |k| \le 2^{A+1}$ we have 
\begin{align*}
d_k 
&
\approx 
\sum_{ \substack{ 
\veck \in D_A, 
\\
k_1+\cdots+k_N=k
} }
2^{A(m-b_1-\cdots-b_N)} 
\\
&
=
2^{A(m-b_1-\cdots-b_N)} 
\\
&
\quad 
\times 
\, \mathrm{card}\, 
\left\{(k_1, \dots, k_{N-1}) \in (\Z^n)^{N-1}
\mid 
2^{A-L-1} \le |k_j| \le 2^{A-L}, 
\; 
j=1, \dots, N-1
\right\}
\\
&
\approx
2^{A(m-b_1-\cdots-b_N)} \,
2^{An(N-1)}. 
\end{align*}
Thus 
\[
\bigg(
\sum_{k\in \Z^n} 
|d_k|^2
\bigg)^{1/2}
\approx
2^{A(m-b_1-\cdots-b_N)} 2^{An(N-1)} 2^{An/2} . 
\]
Hence 
\eqref{dkell2} implies 
\[
2^{A(m-b_1-\cdots-b_N)} 2^{An(N-1)} 2^{An/2} 
\lesssim 1. 
\]
Since this holds for arbitrarily large $A$, 
we have 
\begin{equation*}
m 
\le
\sum_{j=1}^{N} b_j -n(N-1) - \frac{n}{\, 2\, }. 
\end{equation*}
Taking limit as 
$a_j \to 0$ for $j \in J^{c}$, 
$a_j \to 1$ for $ j \in J$, 
and  
$\epsilon \to 0$, 
we obtain 
\begin{align*}
m 
&  
\le 
\sum_{j\in J^c} 
\bigg( n -   \frac{n}{\, p_j\, } \bigg)
+
\sum_{j\in J} 
\frac{n}{\, 2\, } 
-n(N-1) - \frac{n}{\, 2\, } 
\\
&
=
\frac{n}{\, 2\, } 
-
\sum_{j\in J^c} 
\frac{n}{\, p_j\, } 
-
\sum_{j\in J} 
\frac{n}{\, 2\, },  
\end{align*}
which is the inequality \eqref{criticalm}. 
\vs

{\it Case II: $2<p<\infty$ and $J\neq \emptyset$. } 

In order to simplify notation, 
we assume 
$1\in J$, that is $2\le p_1 < \infty$. 
Suppose the multipliers 
$\sigma = \sigma (\xi_1, \xi_2, \dots, \xi_N)$ 
and 
$\tau=\tau (\xi_1, \xi_2, \dots, \xi_N)$ are related 
by 
\[
\tau (\eta, \xi_2, \dots, \xi_N)
=
\sigma (-\xi_2-\cdots - \xi_N - \eta , 
\xi_2, \dots, \xi_N). 
\]
Then 
\[
\int_{\R^n} T_{\sigma} (f_1, f_2, \cdots, f_N)(x) g(x) \,dx 
= \int_{\R^n} T_{\tau } (g,f_2, \cdots, f_{N})(x) f_1(x) \,dx 
\]
and thus duality 
implies 
\[
T_{\sigma}: 
L^{p_1} \times L^{p_2} \times \cdots \times L^{p_N}\to L^p
\; 
\Leftrightarrow\; 
T_{\tau} 
:
L^{p'} \times L^{p_2} \times \cdots \times L^{p_N}\to L^{p_1'}. 
\]
Also it is easy to see that 
\[
\sigma \in S_{0,0}^{m}(\R^n, N)^{\times} 
\; 
\Leftrightarrow\; 
\tau 
\in S_{0,0}^{m}(\R^n, N)^{\times}. 
\]
Hence 
\eqref{opxLp} implies 
\[
\op(S_{0,0}^{m}(\R^n,N)^{\times}) 
\subset
B(L^{p'} \times L^{p_2} \times \cdots \times L^{p_N} \to L^{p_1'}). 
\]
In the present case, 
we have 
$1 < p' <2 $ and 
$1 < p_1' \le 2$. 
Hence by what has been proved in Case I, 
we have 
\begin{align*}
m 
\le
\frac{\,n\,}{2} 
- \frac{n}{\,p'\, } 
-\sum_{j \in J \setminus \{1\}} \frac{n}{\,2\,} 
- \sum_{j\in J^c} 
\frac{n}{\,p_j\,} 
= 
\frac{\,n\,}{p} 
-\sum_{j \in J } \frac{n}{\,2\,} 
- \sum_{j\in J^c} \frac{n}{\,p_j\,}, 
\end{align*}
which is the inequality \eqref{criticalm}. 
\vs

{\it Case III: $2<p<\infty$ and $J= \emptyset$. } 

In this case, 
$1<p_j<2$ for all $j$ and 
the condition \eqref{criticalm} reads as  
\begin{equation}\label{goal}
m 
\le
\frac{\,n\,}{p} - \sum_{j=1}^{N} 
\frac{n}{\,p_j\,}.  
\end{equation}

Take a function $\Psi \in C_{0}^{\infty} ((\R^n)^N)$ such that 
\begin{align*}
&
\supp \Psi \subset 
\left\{\vecxi \in (\R^n)^N \mid    2^{-1/2}N \le 
\textstyle{\sum_{j=1}^N } |\xi_j| \le 2^{1/2}N
\right\}, 
\\
&
\Psi (\vecxi) = 1 \;\; \text{if}\;\; 
2^{-1/4}N \le \textstyle{\sum_{j=1}^N } |\xi_j| \le 2^{1/4}N. 
\end{align*}
Let $m\in \R$ and 
consider the multiplier 
\[
\sigma (\vecxi) 
= 
\sum_{j=0}^{\infty} 2^{j m}
\Psi (2^{-j} \vecxi), 
\quad \vecxi \in (\R^n)^N, 
\] 
which certainly belongs to the class 
$S^{m}_{0,0}(\R^n, N)^{\times}$. 
Take a nonzero function $\psi \in C_{0}^{\infty} (\R^n)$ such that 
\[
\supp \psi \subset 
\{\xi \in \R^n \mid  2^{-1/4} \le |\xi| \le 2^{1/4}\}. 
\]  
Let $f_{j,k}\in \calS (\R^n)$ 
be 
the functions whose Fourier transforms are given by 
\[
\widehat{f_{j, k}}(\xi_j) = 2^{kn (1/p_j -1)} \psi (2^{-k} \xi_j), 
\quad j=1, \dots, N, \;\; k \in \N.  
\]
Then 
\[
\left\|
{f_{j, k}}
\right\|_{L^{p_j}} 
=
\left\| \calF^{-1} \psi \right\|_{L^{p_j}}
< \infty. 
\]
From the conditions on $\Psi$ and $\psi$, 
we have 
\begin{align*}
T_{\sigma} (f_{1,k}, \dots, f_{N,k} )(x) 
&= 
2^{km} \prod_{j=1}^{N} \calF^{-1} 
\left( 
2^{kn (1/p_j -1)} \psi (2^{-k} \xi_j)
\right)(x)
\\
&
=
2^{k(m + \sum_{j=1}^{N}  n/p_j ) }\left( 
\calF^{-1} \psi (2^k x )
\right)^{N}
\end{align*}
and hence 
\[
\left\|
T_{\sigma} (f_{1,k}, \dots, f_{N,k} )
\right\|_{L^p}
= c \, 
2^{k(m + \sum_{j=1}^{N} n/p_j - n/p)}
\]
with $c = \|(\calF^{-1}\psi)^N\|_{L^p} \in (0, \infty)$. 
Thus if \eqref{opxLp} holds we have 
$
2^{k(m + \sum_{j=1}^{N} n/p_j - n/p)}
\lesssim 1
$ 
for all $k\in \N$, 
which implies \eqref{goal}. 
This completes the proof of 
Lemma \ref{lem1-necessity}. 
\end{proof}

\begin{proof}[Proof of Theorem \ref{main-thm-necessity}] 
Let $N\ge 1$,  
$0 < p, p_1,\dots, p_N \le \infty$, and  
$1/p \le 1/p_1 + \cdots + 1/p_N$. 
We assume there exists an 
$\epsilon>0$ such that 
\begin{align}&
\op\left( S^{m}_{0,0}(\R^n, N)^{\times} \right) \subset
B(H^{p_1} \times \cdots \times H^{p_N} \to L^{p}), 
\label{assumption1}
\\&
m = 
\min \left\{ \frac{n}{\,p\,},\, \frac{n}{\,2\,} \right\} 
- \sum_{j =1}^{N} 
\max \left\{ 
\frac{n}{\,p_j\,},\, \frac{n}{\,2\,}  \right\} 
+ \epsilon, 
\label{assumption2}
\end{align}
with 
$L^{\infty}$ replaced by $BMO$ when 
$p=\infty$, and will derive a contradiction. 
In  
\cite[Theorem 6.1, Example 1.4]{KMT-arxiv-2} or 
in our Theorem \ref{main-thm-sufficiency} 
(or by Plancherel's theorem in the case $N=1$), 
it is already proved that 
\begin{equation}\label{factL2}
\op\left( S^{m}_{0,0}(\R^n, N)^{\times} \right) \subset
B(L^2 \times \cdots \times L^2 \to L^{2}), \quad
m=\frac{\,n\,}{2} - \frac{\,Nn\, }{2}. 
\end{equation}
From 
\eqref{assumption1}-\eqref{assumption2} 
and \eqref{factL2}, by 
complex interpolation, it follows that 
\begin{equation}\label{boundtildem}
\op\left( 
S^{\widetilde{m}}_{0,0}(\R^n, N)^{\times} \right) \subset
B(L^{ \widetilde{p_1} } \times \cdots \times L^{ \widetilde{p_N} } 
\to L^{ \widetilde{p} }),  
\end{equation}
where 
$\widetilde{p_j}, \widetilde{p}, \widetilde{m}$ are given by 
\begin{align*}
&
\frac{1}{ \widetilde{p_j} } 
= \frac{1-\theta}{2} + \frac{\theta}{p_j}, 
\quad j =1, \cdots, N, 
\\
&
\frac{1}{ \widetilde{p} } 
= \frac{\,1-\theta\,}{2} + \frac{\theta}{\,p\,}, 
\\
&
\widetilde{m}
= (1-\theta)
\left( \frac{\,n\,}{2} - \frac{\,Nn\, }{2} \right) 
+ \theta
\left( \min \left\{ \frac{n}{\,p\,},\, \frac{n}{\,2\,} \right\} 
- \sum_{j =1}^{N} 
\max \left\{ 
\frac{n}{\,p_j\,},\, \frac{n}{\,2\,}  \right\} 
+ \epsilon
\right)
\\
&\phantom{\widetilde{m}}
=\min \left\{ \frac{n}{\,\widetilde{p}\,},\, \frac{n}{\,2\,} \right\} 
- \sum_{j =1}^{N} 
\max \left\{ 
\frac{n}{\,\widetilde{p_j}\,},\, \frac{n}{\,2\,}  \right\} 
+ \theta \epsilon,   
\end{align*}
and 
$0 < \theta  < 1$ is a sufficiently small number. 
Notice that 
we have 
$1< \widetilde{p_j}<\infty$ 
for all $j$ 
if $\theta$ is 
sufficiently small. 
Hence 
\eqref{boundtildem} with the above 
$\widetilde{p_j}, \widetilde{p}, \widetilde{m}$ 
cannot hold as we have shown in 
Lemma \ref{lem1-necessity}. 
Thus Theorem \ref{main-thm-necessity} is proved. 
\end{proof}



\end{document}